\title{A braidoid equivalence for spherical knotoids}
\author{Anastasios Kokkinakis}
\date{}
\theoremstyle{theorem}
\newtheorem{theorem}{Theorem}[section]
\newtheorem{corollary}{Corollary}[theorem]
\newtheorem{proposition}{Proposition}[section]
\newtheorem{lemma}[theorem]{Lemma}
\theoremstyle{definition}
\newtheorem{definition}{Definition}[section]
\newtheorem{remark}{Remark}
\begin{document}
\setlength{\parindent}{20pt}

\maketitle

\begin{abstract}
    Braidoids form a counterpart theory to the theory of planar knotoids, just as braids do for three-dimensional links. As such, planar knotoid diagrams represent the same knotoid in $\mathbb{R}^2$ if and only if they can be presented as the closure of two labeled braidoid diagrams related by an equivalence relation, named $L$-equivalence. In this paper, we refine the notion of $L$-equivalence of braidoid diagrams in order to obtain an equivalence theorem for (multi)-knotoid diagrams in $S^2$ when represented as the closure of labeled braidoid diagrams.
\end{abstract}

\section{Introduction}

A knotoid in an oriented surface $Σ$ is an equivalence class of oriented open-ended curve diagrams in $Σ$, with two endpoints that can lie in any local region determined by the diagram. The equivalence is generated by the Reidemeister moves and isotopies of $Σ$, which include the swinging of an endpoint within the region it lies. The theory of knotoids was first introduced by Turaev \cite{turaev2012vladimir} in 2011 as a way to reduce complexity in the study of knots. Indeed, Turaev observed that an open curve diagram drawn in the sphere, endowed with over/under data on the double points, can carry the same, if not more, information as a knot diagram, and proved that the theory of spherical knotoids extends classical knot theory. Knotoids are also of interest in biology. Indeed, recent studies by Goundaroulis et al. \cite{dabrowski2019theta, goundaroulis2017studies, goundaroulis2017topological} contribute to the topological classification of proteins by analysing the distribution of their projections as spherical or planar knotoids.\\
\indent Since knotoids are defined in a two-dimensional environment as supposed to knots and links, which are objects embedded in three-dimensional space, differences in the theory arise when considering knotoids in different surfaces.  Indeed, due to the spherical move, equivalent knotoids in $S^2$ may not correspond to equivalent knotoids in $\mathbb{R}^2$. Furthermore, as it turns out, the theory of planar knotoids extends the theory of spherical knotoids.\\
\indent Turaev also linked the theory of spherical knotoids with the the theory of $θ$-curves in three-space, proving that spherical knotoids correspond bijectively to label-preserving ambient isotopy classes of simple theta-curves. In analogy, Gügümcü and Kauffman in \cite{gugumcu2017new} proved a bijective correspondence of planar knotoids to ambient isotopy classes of open-ended curves in three-space with their ends attached to two parallel lines. \\
\indent The theory of braidoids was introduced in 2017 by Gügümcü and Lambropoulou \cite{gugumcu2017knotoids} as the counterpart theory to knotoids in analogy to braids being the algebraic counterpart objects to knots/links.  A braidoid diagram consists of a set of descending strands such that two of them are special: one of them terminates at an endpoint, the head, and the other starts from the second endpoint, the leg. Either endpoint may lie in any region and at any height of the diagram.  The study of braidoids was naturally linked with the study of knotoids in $\mathbb{R}^2$ rather than knotoids in $S^2$. In \cite{gugumcu2017knotoids} it is shown that every planar knotoid can be transformed equivalently to the closure of a braidoid. This result corresponds to the classical Alexander theorem. Moreover, in classical knot theory we have the Markov equivalence in the set of braids, which generates ambient isotopy between links in $S^2$ when we consider them as closed braids. In \cite{lambropoulou1997markov} Lambropoulou and Rourke introduced the $L$-moves, a set of moves of a single type which generate the Markov equivalence. The advantage of the $L$-equivalence is that it can be formulated without considering the group structure of braids. Braidoids do not possess an obvious alegbraic structure, so a braidoid analog of the $L$-equivalence was formulated by Gügümcü and Lambropoulou in order to prove the analogous equivalence theorem for knotoids in $\mathbb{R}^2$.\\
\indent Spherical knotoids are of special interest as far as classical knot theory is concerned. With this in mind, our aim is to link the theory of braidoids with the theory of spherical knotoids. To this end, we first show that Turaev's normal spherical knotoids (i.e. spherical knotoids with the leg lying in the connected component of the point at infinity)  correspond bijectively to the so-called planar knotoids in spherical state (see Theorem~\ref{thm:normalbraidoiding}). We then show that every planar knotoid in spherical state can be transformed equivalently to the closure of a so-called braidoid in spherical state. We finally present a set of moves between braidoids,  which realize the spherical move in the closures (see Definitions ~\ref{slI} and \ref{slII}), and which extend the moves in \cite{gugumcu2017knotoids}, such that equivalence classes of braidoids under the augmented set of moves correspond bijectively to spherical knotoids.\\
\indent The paper is organized as follows. In Section~\ref{sec:knotoidsandsimplethetacurves} we recall basics on knotoids and their geometric realization as theta-curves. We discuss equivalence between knotoid diagrams and the differences in the theory of planar vs spherical knotoids. In section \ref{sec:braidoids} we discuss braidoids. We recall the definitions of braidoid isotopy and labeled braidoid equivalence. Finally we recall the definition of the closure of a braidoid diagram, we detail the steps of the braidoing algorithm and of $L$-equivalence between labeled braidoid diagrams. In section \ref{sec:sphericalknotoidsandbraidoidsinsphericalstate} we define the spherical state of a braidoid diagram and the effect of the braidoiding algorithm on normal knotoid diagrams. In \ref{sec:Sphericalequivalenceforlabeledbraidoids} we introduce two sets of moves on labeled braidoid diagrams and give a refinement of the $L$-equivalence. Lastly, we give an analog of the Markov theorem for braidoids and spherical knotoids.\\
\indent The author would like to thank professor Sofia Lambropoulou for her guidance and patience during the completion of this paper.

\section{Knotoids}\label{sec:knotoidsandsimplethetacurves}

In this section we present the basic notions from the theory of knotoids. We focus particularly to the cases of our interest, namely spherical and planar knotoids, which we compare via the stereographic projection. After a series of observations we establish Theorem \ref{thm:sphericalstate} which links spherical knotoids with planar knotoids in spherical state.

\subsection{Planar and spherical knotoids}\label{subsec:planarsphericalknotoids}

A \textit{planar knotoid diagram} $K$ in $\mathbb{R}^2$ is a generic immersion of the interval $[0,1]$ in the interior of $\mathbb{R}^2$ whose only singularities are transversal double points endowed with over/under-crossing data. The images of $0$ and $1$ under this immersion are called the leg and the head of $K$, respectively. These two points are distinct from each other and from the double points; they are called the endpoints of $K$. We orient $K$ from the leg to the head. The double points of $K$ are called the crossings of $K$.\\
\indent Two planar knotoid diagrams are said to be \textit{planar equivalent} if they can be related by a finite sequence of local isotopies of $\mathbb{R}^2$ and applications of the Reidemeister moves $RI,RII,RIII$ away from the endpoints. The equivalence class of a planar knotoid diagram shall be called a \textit{planar knotoid}. The set of planar knotoids shall be denoted $\mathcal{K}(\mathbb{R}^2)$. When $K_1$ and $K_2$ are planar equivalent knotoid diagrams we shall write $K_1 \simeq_{\mathbb{R}^2} K_2$.\\
\indent A \textit{fake forbidden move} on a knotoid diagram $K$ is when an endpoint has seemingly crossed over or under a segment of an arc, however the move can be as a series of equivalence moves and/or planar isotopies (see Fig. \ref{fig:fake}).

\begin{figure}[htp]
    \centering
    \includegraphics[width=7cm]{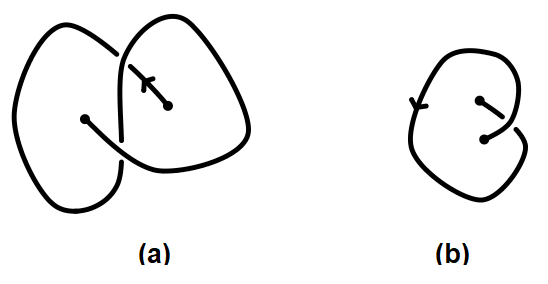}
    \caption{Examples of  knotoid diagrams.}
    \label{fig:knotoid}
\end{figure}

\begin{figure}[htp]
    \centering
    \includegraphics[width=7cm]{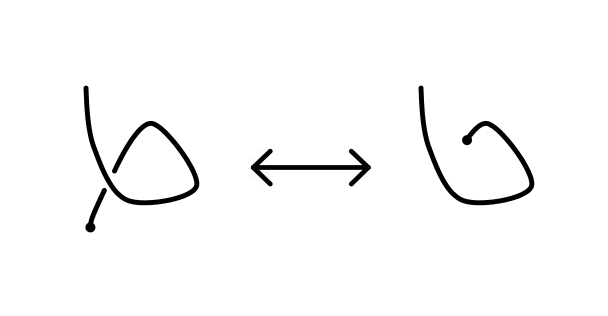}
    \caption{Example of a fake forbidden move on a knotoid diagram.}
    \label{fig:fake}
\end{figure} 

A \textit{spherical knotoid diagram} $K$ is analogously defined as a generic immersion of the interval $[0,1]$ in the interior of $S^2$ whose only singularities are transversal double points endowed with over/under-crossing data.\\
\indent \textit{Multi-knotoid} diagrams in $\mathbb{R}^2$ (res. $S^2$) are defined as generic immersions of a single oriented segment and some oriented circles in $\mathbb{R}^2$ (res. $S^2$) endowed with over/under-crossing data.

\begin{definition}
    A \textit{spherical move on a spherical multi-knotoid diagram} is a global isotopy move on $S^2$ whereby an arc is pushed around the back of the sphere by a finite sequence of local sphere isotopies. Similarly, a \textit{spherical move on a planar multi-knotoid diagram} is a global move whereby an arc in the unbounded region is transferred across the other side of the multi-knotoid diagram. The resulting arc is lying also in the unbounded region, see Fig. \ref{fig:sphericalmove}.  
\end{definition}

Note that, unlike classical knot diagrams, a spherical move on a planar multi-knotoid diagram is not an equivalence move of planar multi-knotoid diagrams, since it necessarily involves the forbidden moves.

\begin{definition}
Two spherical multi-knotoid diagrams are said to be \textit{spherical equivalent} if they can be related by a finite sequence of local isotopies of $S^2$, applications of the Reidemeister moves $RI,RII,RIII$ away from the endpoints, together with the spherical moves. The equivalence class of a spherical multi-knotoid diagram $K$ shall be called \textit{spherical multi-knotoid}, denoted $k$.  The set of spherical knotoids shall be denoted $\mathcal{K}(S^2)$. When $K_1$ and $K_2$ are spherical equivalent multi-knotoid diagrams we shall write $K_1 \simeq_{S^2} K_2$.

\end{definition}

\begin{figure}[htp]
    \centering
    \includegraphics[width=10cm]{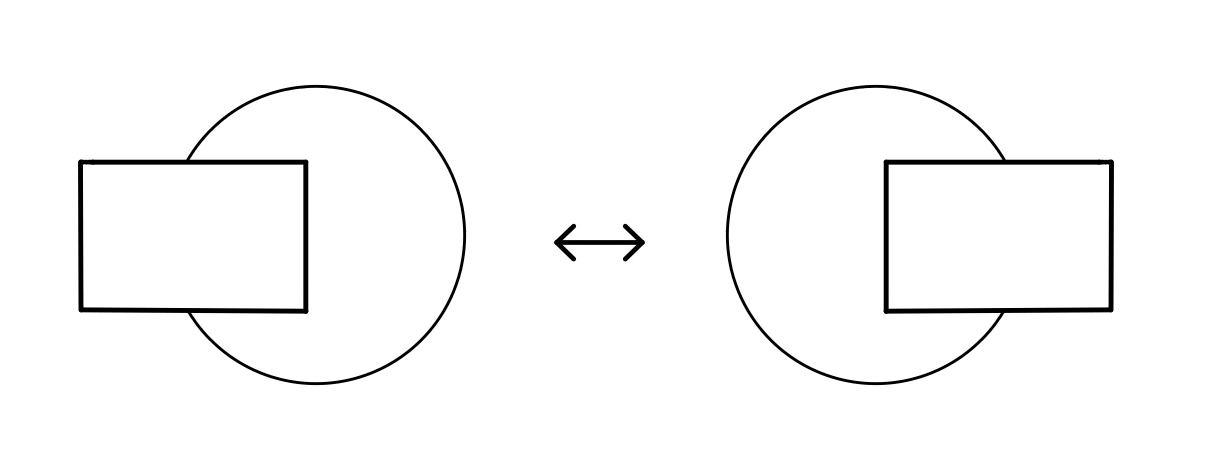}
    \caption{The spherical move.}
    \label{fig:sphericalmove}
\end{figure} 

\textit{Observation 1:} Let us now consider $S^2$ as the compactification of $\mathbb{R}^2$, $S^2 = \mathbb{R}^2 \cup \{\infty\}$. 
 Equivalently, $\mathbb{R}^2$ is homeomorphic with $S^2 \setminus \{\infty\}$, and we shall write $\mathbb{R}^2 = S^2 \setminus \{\infty\}$. The stereographic projection $s$  of $S^2 \setminus \{\infty\}$ on $\mathbb{R}^2$ is a specific homeomorphism and its inverse  $s^{-1}$ can be viewed as an inclusion map of $\mathbb{R}^2$ in $S^2$. 
 
 \textit{Observation 2:} By a general position argument, a knotoid diagram in  $S^2 = \mathbb{R}^2 \cup \{\infty\}$ can be assumed to miss the point at infinity.   The stereographic projection $s$ induces then a well-defined bijective map  from the set of spherical knotoid diagrams to the set of planar knotoid diagrams. Note that, an arc crossing the point at infinity performs precisely a spherical move.

\textit{Observation 3:} Two  planar knotoid diagrams $K_1, K_2$ are planar equivalent (i.e. differ by moves RI, RII, RIII and planar isotopy) if and only if $s^{-1}(K_1), s^{-1}(K_2)$ differ by moves RI, RII, RIII and local isotopies, excluding the spherical moves.
Hence, $s^{-1}$ induces a well-defined map from $\mathcal{K}(\mathbb{R}^2)$ to  $\mathcal{K}(S^2)$, which is surjective but not injective. Indeed, due to the extra spherical moves in $S^2$, different equivalence classes of planar knotoids may map to the same spherical knotoid. 
 
\textit{Observation 4:} A knotoid diagram  $K$ in $S^2$ is an open curve diagram. Letting $K_0$ be the 4-valent graph obtained from $K$ by forgetting the over/under data on the crossings, we have that its complement $S^2 \setminus K_0$ has a component which encloses in its interior the point at infinity. Using  spherical moves, Turaev pointed out that any spherical multi-knotoid $k$ has a (at least one) representative multi-knotoid diagram $N$ in $S^2$ where one endpoint (preferably the leg) and the point at infinity are interior points of the same component of $S^2 \setminus K_0$.  Such a diagram is said to be \textit{normal}. 

 \begin{definition}
 The equivalence relation in the set of  multi-knotoid diagrams in $S^2$, generated by moves RI, RII, RIII and local isotopies, excluding the spherical moves, shall be referred to as \textit{normal equivalence}. 
\end{definition}

\noindent Restricting now to normal spherical diagrams we have:

\begin{proposition}[Cf.  \cite{turaev2012vladimir}] \label{normal correspondence}
Τwo  multi-knotoid diagrams $K_1,K_2$  in $S^2$ are spherical equivalent if and only if any two normal representatives of theirs $N_1, N_2$  in $S^2$ are normal equivalent. That is, the elements of $\mathcal{K}(S^2)$ are in bijective correspondence with equivalence classes of normal multi-knotoid diagrams under normal equivalence.
\end{proposition}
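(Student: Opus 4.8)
The plan is to establish the two implications of the biconditional separately, with the forward direction being the more delicate one. For the backward direction, suppose two normal representatives $N_1, N_2$ are normal equivalent. Since normal equivalence is generated by the Reidemeister moves and local isotopies, which are a subset of the moves generating spherical equivalence, it follows immediately that $N_1 \simeq_{S^2} N_2$. Because $N_i$ is by definition a representative of $K_i$ (i.e. $N_i \simeq_{S^2} K_i$), transitivity of spherical equivalence yields $K_1 \simeq_{S^2} K_2$. This direction is essentially formal.

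For the forward direction, assume $K_1 \simeq_{S^2} K_2$ and fix arbitrary normal representatives $N_1, N_2$. I would first invoke \emph{Observation 4}, which guarantees that every spherical multi-knotoid diagram admits at least one normal representative, so the statement is non-vacuous. The goal is to show $N_1$ and $N_2$ are normal equivalent, i.e. related by Reidemeister moves and local isotopies \emph{without} any spherical move. The key difficulty is that the given spherical equivalence between $K_1$ and $K_2$ may use spherical moves freely, and we must argue that these can be dispensed with once we have passed to normal representatives. The strategy is to show that a spherical move, when performed on a normal diagram, can always be replaced (up to normal equivalence) by a sequence of ordinary Reidemeister moves and local isotopies that keep the leg and the point at infinity in a common complementary region. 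Concretely, one tracks how the position of $\infty$ relative to the diagram changes under each elementary move in the spherical equivalence sequence, and uses the freedom to swing the arc and to reposition $\infty$ by a local isotopy within its region so that no arc is ever forced across the back of the sphere.

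The main obstacle, and the technical heart of the argument, is showing \textbf{independence of the choice of normal representative}: if $N_1$ and $N_1'$ are two different normal representatives of the same $K_1$, they must themselves be normal equivalent. This is where the structure of normal diagrams is used essentially. Given that $N_1 \simeq_{S^2} N_1'$ via some sequence of moves, I would analyze the sequence by its spherical moves, which are the only moves outside normal equivalence. Each spherical move pushes an arc lying in the unbounded region (the region containing $\infty$) around the back; since both $N_1$ and $N_1'$ have the leg sharing a region with $\infty$, one can arrange for the arc being transferred to avoid separating the leg from $\infty$, and then realize the transfer as a local isotopy in a normal intermediate diagram. The crux is a general-position / innermost-arc argument ensuring that any obstructing spherical move can be normalized away, at the cost of inserting additional Reidemeister moves, so the net effect is expressible within normal equivalence.

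Once independence of representative is established, the forward direction and the bijective correspondence statement follow cleanly: the map sending a spherical multi-knotoid $k$ to the normal-equivalence class of any of its normal representatives is well-defined (by independence of representative), surjective (by \emph{Observation 4}), and injective (by the forward implication), hence a bijection. I expect the write-up to lean on Turaev's original argument \cite{turaev2012vladimir}, adapting it to the multi-knotoid setting, with the bulk of the new work concentrated in the normalization-of-spherical-moves lemma described above.
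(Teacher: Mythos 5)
Your overall skeleton is right: the backward implication is formal (normal equivalence moves are a subset of spherical equivalence moves), and the substance of the proposition is the forward implication, which you correctly reduce to independence of the choice of normal representative. Note, however, that the paper itself offers no proof of this proposition --- it is stated with ``Cf.~\cite{turaev2012vladimir}'' and imported from Turaev, where the mechanism is the correspondence between normal diagrams and simple theta-curves (Theorem~\ref{Tur1}) rather than a direct diagrammatic argument. So there is no in-paper proof to match against; what matters is whether your argument stands on its own.

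It does not yet, because the one step that carries all the content is asserted rather than proved. Your plan hinges on the claim that ``a spherical move, when performed on a normal diagram, can always be replaced (up to normal equivalence) by a sequence of ordinary Reidemeister moves and local isotopies,'' justified by ``one can arrange for the arc being transferred to avoid separating the leg from $\infty$'' and ``a general-position / innermost-arc argument.'' This is precisely the proposition restated, not an argument for it: the paper explicitly notes that a spherical move on a planar diagram is \emph{not} a planar equivalence move because it necessarily involves forbidden moves at the endpoints, and Remark~\ref{rem:breakingclasses} records that a spherical equivalence class genuinely breaks into many normal equivalence classes that differ exactly by spherical moves. So the obstruction you must kill --- the transferred arc being forced past an endpoint --- is real, and ``arranging to avoid it'' is exactly what needs proof. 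Moreover, a sequence of moves joining $N_1$ to $N_2$ passes through intermediate diagrams that need not be normal, so you cannot treat the spherical moves one at a time as moves ``performed on a normal diagram''; you would first have to normalize the whole sequence, which is an additional unaddressed step. To close the gap you either need to carry out this normalization honestly (tracking the region of $\infty$ and the leg through every elementary move and showing the composite of all spherical moves is trivial once the two ends are normal), or, as in Turaev's approach, route the argument through an invariant of normal diagrams that is manifestly unchanged by normal equivalence and determines the spherical knotoid together with the leg's region --- namely the associated simple theta-curve. As written, the proposal identifies where the difficulty lives but does not resolve it.
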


\textit{Observation 5:} Two equivalent multi-knotoid diagrams in $\mathbb{R}^2$ have inverse images via $s^{-1}$ that are also equivalent in $S^2$. Similarly, the  images $s(K_1), s(K_2)$  of two normal equivalent diagrams $K_1,K_2$ through the stereographic projection are related by the moves RI, RII, RIII in $\mathbb{R}^2$ and planar isotopy, so they represent the same multi-knotoid in $\mathbb{R}^2$. 

\begin{remark}\label{rem:breakingclasses}
    Hence, the equivalence class of a spherical knotoid will, in general, break into a multitude of normal equivalence classes, any two of which differ by spherical moves, and which correspond bijectively to equivalence classes of planar knotoids. 
\end{remark}

\noindent For an example of the differences in the theory between $S^2$ and $\mathbb{R}^2$, the knotoids corresponding to the diagrams in Fig. \ref{fig:knotoid} are both the trivial knotoid in $S^2$, where in $\mathbb{R}^2$ Turaev \cite{turaev2012vladimir} has shown them to be non-trivial.
 
\textit{Observation 6:} From the above,  when the stereographic projection $s$ is restricted to normal multi-knotoid diagrams,  we have a well-defined injective map from normal equivalence classes of normal multi-knotoid diagrams in $S^2$  to planar multi-knotoids.
 
\begin{definition} \label{def:sphericalstate}
    The image of a normal spherical multi-knotoid diagram via the stereographic projection $s$ is a planar multi-knotoid diagram with its leg lying in the unbounded region of $\mathbb{R}^2 \setminus K_0$, and shall be also referred to as being  \textit{in spherical state}. 
\end{definition}  

\noindent The above observations leads to the following:
\begin{theorem} \label{thm:sphericalstate}
Τhe elements of $\mathcal{K}(S^2)$ are in bijective correspondence with  elements of $\mathcal{K}({\mathbb R}^2)$ in spherical state.     
\end{theorem}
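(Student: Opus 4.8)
The plan is to realize the claimed bijection as the composite of the correspondence of Proposition~\ref{normal correspondence} with the map induced by the stereographic projection, so that the theorem becomes an assembly of results already in hand. First I would invoke Proposition~\ref{normal correspondence} to replace $\mathcal{K}(S^2)$ by the set $\mathcal{N}$ of normal equivalence classes of normal multi-knotoid diagrams in $S^2$; this is legitimate since every spherical multi-knotoid admits a normal representative (Observation~4) and any two such are normal equivalent. It then suffices to produce a bijection between $\mathcal{N}$ and the set of elements of $\mathcal{K}(\mathbb{R}^2)$ that are in spherical state, where I read the latter as the planar knotoids admitting at least one representative diagram whose leg lies in the unbounded region.

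Next I would transport everything to $\mathbb{R}^2$ through $s$. On the level of diagrams, $s$ is a bijection from spherical to planar knotoid diagrams (Observation~2), and by Definition~\ref{def:sphericalstate} it carries a normal diagram (leg and $\infty$ in the same region of $S^2\setminus K_0$) to a planar diagram whose leg lies in the unbounded region, i.e. a diagram in spherical state; conversely $s^{-1}$ carries a spherical-state planar diagram back to a normal spherical diagram. By Observation~5 the map $s$ respects the two equivalence relations, so it descends to a well-defined map $\bar{s}\colon \mathcal{N}\to\mathcal{K}(\mathbb{R}^2)$, and by Observation~6 this map is injective. Since every diagram arising as $s$ of a normal diagram is in spherical state, the image of $\bar{s}$ is contained in the set of planar knotoids possessing a spherical-state representative.

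It remains to check surjectivity onto that set. Given a planar knotoid in spherical state, I would choose a representative diagram $D$ with its leg in the unbounded region; then $s^{-1}(D)$ is a normal spherical diagram, and its normal equivalence class maps under $\bar{s}$ to the planar class of $D$. Hence $\bar{s}$ is onto the spherical-state classes, and being also injective it is the desired bijection; composing with the identification $\mathcal{K}(S^2)\cong\mathcal{N}$ of Proposition~\ref{normal correspondence} completes the proof.

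The step I expect to require the most care is the injectivity of $\bar{s}$, which is exactly the content isolated in Observation~6: two spherical-state diagrams that are planar equivalent must come from normal equivalent spherical diagrams. The danger is that a planar equivalence may drag the leg out of the unbounded region and back, and one must be sure the lift to $S^2$ never secretly performs a spherical move. The point is that a planar isotopy of $\mathbb{R}^2$ lifts to an ambient isotopy of $S^2$ fixing the point at infinity, so although intermediate diagrams may leave the spherical-state configuration, the lifted sequence uses only the Reidemeister moves and local isotopies away from $\infty$ that generate normal equivalence, and never the spherical move. Thus no genuine obstruction arises, and the theorem follows by assembling the cited observations.
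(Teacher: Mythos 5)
Your proof is correct and is essentially the argument the paper intends: the paper gives no explicit proof, stating only that ``the above observations lead to'' the theorem, and your write-up is precisely the assembly of Proposition~\ref{normal correspondence} with Observations~2, 3, 5 and~6 (well-definedness, injectivity via the lift of planar isotopies to isotopies of $S^2$ fixing $\infty$, and surjectivity by choosing a spherical-state representative). No substantive difference from the paper's route.
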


\noindent In the present text we will focus on spherical multi-knotoid diagrams.

\subsection{Simple theta-curves in $S^3$ and a lifting of planar knotoids}

A \textit{theta-curve} $θ$ is a graph embedded in $S^3$ and formed by two vertices $v_0, v_1$ and three edges $e_{-}, e_0, e_{+}$ each of which joins $v_0$ to $v_1$. We call $v_0$ and $v_1$ the leg and the head of $θ$ respectively. Each vertex $v \in {v_0,v_1}$ of $θ$ has a closed $3$-disk neighborhood $B \subset S^3$ meeting $θ$ along precisely $3$ radii of $B$. We call such $B$ a \textit{regular neighborhood of $v$}. The sets $θ_{-}=e_0 \cup e_{-}$$ $$θ_{0}=e_{=} \cup e_{-},θ_{+}=e_0 \cup e_{+}$ are called \textit{constituent knots of} $θ$ and are oriented from $v_0$ to $v_1$ on the edges $e_{0},e_{-},e_{+}$ respectively.\\
\noindent By \textit{isotopy of theta-curves}, we mean ambient isotopy in $S^3$ preserving the labels $0, 1$ of the vertices and the labels ${-}, 0, +$ of the edges. The set of isotopy classes of theta-curves will be denoted $Θ$.

A \textit{simple theta-curve} $θ$ is a theta-curve where $θ_{0}=e_{=} \cup e_{-}$ is the unknot. A theta-curve isotopic to a simple theta-curve is itself simple. In \cite{turaev2012vladimir}, Turaev proved that knotoid diagrams in $S^2$ are in bijection with \textit{simple theta-curves} in $S^3$. Therefore a geometric realization for a knotoid diagram is the middle arc on a simple theta-curve in $S^3$. Furthermore, in \cite{turaev2012vladimir} a semigroup structure is defined for $\mathcal{K}(S^2)$
 and for simple theta-curves in $S^3$.

\begin{theorem}[Turaev]\label{thm:theta}
\label{Tur1}
There is a semigroup isomorphism between spherical knotoids and label-preserving ambient
isotopy classes of simple theta-curves.
\end{theorem}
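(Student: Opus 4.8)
The plan is to construct mutually inverse, multiplication-preserving maps between the two semigroups and to verify that each descends to equivalence classes. First I would set up the forward map $\Phi$ sending a spherical knotoid diagram to a simple theta-curve. Fixing the standard embedding $S^2 \subset S^3$ as the equatorial sphere, I lift a diagram $K$ to an embedded arc $e_0$ in $S^3$ by pushing the overstrand of each crossing slightly into the upper hemisphere and the understrand into the lower one, while away from the crossings $e_0$ stays on $S^2$; its endpoints are the leg $v_0$ and the head $v_1$. To complete the theta-graph I attach two arcs $e_-, e_+$ joining $v_0$ to $v_1$ in a fixed standard position (two short, unknotted, unlinked arcs running just off $S^2$ and meeting $e_0$ only at the vertices), chosen so that $\theta_0 = e_- \cup e_+$ bounds an embedded disk and is therefore the unknot. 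Thus $\Phi(K)$ is by construction a simple theta-curve, with the three edges labelled and oriented as required.

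Next I would show $\Phi$ is well defined on equivalence classes. Each planar move $RI, RII, RIII$ performed away from the endpoints lifts to a local ambient isotopy of $e_0$ supported in a ball disjoint from $e_\pm$, hence to a label-preserving isotopy of the theta-curve; swinging an endpoint within its region lifts by dragging the corresponding vertex together with the attached ends of $e_-, e_+$. The one genuinely two-dimensional ingredient is the spherical move: pushing an arc of $K$ across the back of $S^2$ is realized by sliding that arc over the far hemisphere, an ambient isotopy of $S^3$ that leaves the standard arcs $e_\pm$ fixed. Since, by Proposition~\ref{normal correspondence}, spherical equivalence is generated by these moves on normal representatives, $\Phi$ descends to a well-defined map on $\mathcal{K}(S^2)$.

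For the inverse $\Psi$ I would exploit simpleness directly: given a simple theta-curve, the unknottedness of $\theta_0 = e_- \cup e_+$ lets me ambient-isotope the curve so that $e_- \cup e_+$ becomes a round great circle of a standard sphere $S^2 \subset S^3$. Projecting $e_0$ onto this $S^2$, recording the over/under data from the height function transverse to $S^2$, yields a knotoid diagram whose leg and head are the images of $v_0$ and $v_1$. The semigroup operation is treated in parallel: the product of two knotoids is their concatenation (head of the first identified with the leg of the second, stacked in $S^2$), which under $\Phi$ corresponds to the vertex-sum of theta-curves inside a regular neighbourhood of a vertex, so that $\Phi(K_1 \cdot K_2)$ is isotopic to $\Phi(K_1) \cdot \Phi(K_2)$.

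The hard part will be proving that $\Phi$ and $\Psi$ are genuinely inverse, and in particular that $\Phi$ is injective, i.e. that ambient-isotopic simple theta-curves project to spherically equivalent knotoids. This requires taking a generic ambient isotopy, slicing it into elementary passages, and tracking the trace of the isotopy on the projection to $S^2$; the projected diagram changes by Reidemeister moves, endpoint swings, and --- precisely when an arc of $e_0$ sweeps across the disk $D$ spanned by $\theta_0$ --- by a spherical move. The role of simpleness is exactly to guarantee, after a preliminary normalization, that such an embedded spanning disk $D$ persists throughout the isotopy, so that every passage of $e_0$ behind $S^2$ is a legitimate spherical move rather than an uncontrolled change. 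Keeping this disk embedded and controlling its interaction with $e_0$ across the whole isotopy is the principal technical obstacle; once it is overcome, $\Psi \circ \Phi = \mathrm{id}$ and $\Phi \circ \Psi = \mathrm{id}$ follow, and compatibility with the products upgrades the bijection to the desired semigroup isomorphism.
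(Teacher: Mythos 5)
The paper does not prove Theorem~\ref{Tur1}: it is imported from Turaev \cite{turaev2012vladimir} as a black box, so the only meaningful comparison is with Turaev's original argument. Your forward map is essentially his: lift the diagram to an embedded arc $e_0$ in a bicollar of $S^2\subset S^3$ by resolving crossings vertically, and complete to a theta-curve with two auxiliary edges. One imprecision worth fixing: $e_-$ and $e_+$ should be taken as arcs running through the two complementary $3$-balls of $S^2$ in $S^3$, one in each ball, not as ``short arcs just off $S^2$''. Arcs in a ball with fixed boundary are unique up to isotopy, which is what makes $\Phi$ independent of the choice of $e_\pm$ and makes $\theta_0=e_-\cup e_+$ visibly unknotted; an arc merely ``just off $S^2$'' from $v_0$ to $v_1$ would have to be routed past the strands of $K$, and different routings could a priori give non-isotopic theta-curves. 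The well-definedness of $\Phi$ under Reidemeister moves, endpoint swings and spherical moves is fine (the spherical move is an ambient isotopy of $S^2$ and lifts trivially), as is your description of the product as concatenation versus vertex multiplication.

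The genuine gap is exactly where you place it, and your proposed repair does not work as stated. You want to take a generic ambient isotopy between two simple theta-curves and argue that an embedded spanning disk $D$ for $\theta_0$ ``persists throughout the isotopy'', so that every passage of $e_0$ through $D$ is a legitimate spherical move. There is no reason such a disk, disjoint from the interior of $e_0$, should persist: the isotopy moves $e_0$ arbitrarily, and the moments when $e_0$ meets any chosen spanning disk are precisely the moments you need to control, so the argument is circular. Turaev's route is the reverse normalization: first use the uniqueness of the unknot and of its spanning disk up to isotopy to put $\theta_0$, together with a regular neighbourhood, into a fixed standard position at both ends of the isotopy; this reduces the claim to showing that two properly embedded arcs in $S^2\times[0,1]$ (equivalently, in the complement of a standard neighbourhood of $\theta_0$) with fixed ends are isotopic rel boundary if and only if their projected diagrams differ by Reidemeister moves and isotopy of $S^2$, which is a standard general-position projection argument; finally one absorbs the ambiguity in the standardizing isotopy, which is where the connectivity of the space of standardly positioned unknots enters. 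Without this normalization step your plan does not close, so the injectivity of $\Phi$ --- and hence the theorem --- remains unproved in your sketch.
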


A \textit{Multi-theta curve with $n$ components}  is an embedding in $S^3$ of the set $(S^1)^{n-1} \times Θ$, where $Θ$ is the standard theta-graph consisting of three edges and two vertices. A multi theta-curve with $n$ components consists of a link of $n-1$ knots and a theta-curve. A multi theta-curve is called \textit{simple} when the theta-curve component is simple.

In analogy to Theorem~\ref{thm:theta}, Gügümcü and Kauffman proved in \cite{gugumcu2017new}  that there is a bijective correspondence between planar knotoids and ambient isotopy classes of open-ended curves in three-space with their ends attached to two parallel lines. This representation  was subsequently used in \cite{goundaroulis2017topological} for the topological study of proteins via planar knotoids.

\section{Braidoids and braidoid $L$-equivalence}\label{sec:braidoids}

The theory of braidoids is introduced in \cite{gugumcu2021braidoids} as the braid counterpart theory of planar knotoids. We shall recall the definition of a braidoid, braidoid  isotopy, labelled braidoids and braidoid closure, and an algorithm for turning a planar knotoid into a (labelled) braidoid with equivalent closure. We finally recall the $L$-moves between braidoids and the $L$-equivalence in the set of braidoids, which generates a bijection with equivalence classes of planar knotoids.

\subsection{Braidoid diagrams and isotopy moves}

We identify $\mathbb{R}^2$ with the $xy$-plane with the $y$-axis directed downward.

\begin{definition}
    A \textit{braidoid diagram} $b$ is a system of a finite number of arcs immersed in $[0,1] \times [0,1] \subset \mathbb{R}^2$.  The arcs of $b$ are called the strands of $b$. Following the natural orientation of $[0,1]$, each strand is naturally oriented downward, with no local maxima or minima. There are only finitely many intersection points among the strands, which are transversal double points endowed with over/under data. Such intersection points are called crossings of $b$. A braidoid diagram has two types of strands, the classical strands and the free strands. A classical strand is like a braid strand connecting a point in $[0,1] \times \{0\}$ to a point  $[0,1] \times \{1\} $. A free strand either connects a point in $[0,1] \times \{0\}$ or $[0,1] \times \{1\}$ to an endpoint that is located anywhere in $[0,1] \times [0,1]$ or connects two endpoints located anywhere in $[0,1] \times [0,1]$. The endpoints are two special points that specifically named as the \textit{leg} and the \textit{head} and are emphasized by graphical nodes labeled by $l$ and $h$, respectively, in analogy with the endpoints of a knotoid diagram. The head is the endpoint that is terminal for a free strand with respect to the orientation, while the leg is the starting endpoint for a free strand with respect to the orientation.\\
    \indent The ends of the strands of $b$ other than the endpoints are called \textit{braidoid ends}. We assume that braidoid ends lie equidistantly on the top and the bottom lines and none of them is vertically aligned with any of the endpoints. It is clear that the number of braidoid ends that lie on the top line is equal to the number of braidoid ends that lie on the bottom line of the diagram. The braidoid ends on top and bottom lines are arranged in pairs so that they are vertically aligned, and are called { \it corresponding ends}. For an example of a braidoid diagram see Fig.~\ref{fig:vertical}.
\end{definition}

{\bf Moves on braidoid diagrams:}  Like for knotoid diagrams, we forbid to pull/push an endpoint of a braidoid diagram over or under a strand. These are the \textit{forbidden moves} on braidoid diagrams, as it is clear that allowing forbidden moves can cancel any braiding of the free strands. There are two types of local moves generating the \textit{braidoid isotopy} on braidoid diagrams, movements on segments of strands and movements of the endpoints. More precisely:

\begin{enumerate}
    \item \textit{$Δ$-moves.} A \textit{braidoid $Δ$-move} replaces a segment of a strand with two segments forming a triangular disk, which has no intersection with endpoints and other arcs except the one edge before and the two edges after, whilst the downward orientation of the strands is preserved (see Fig.~\ref{fig:Δ}). Note that Reidemeister moves RII and RIII away from endpoints on braidoid diagrams are special cases of braidoid $Δ$-moves.
    \item \textit{Vertical moves}. As shown in Fig.~\ref{fig:vertical}, the endpoints of a braidoid diagram can be pulled up or down in the vertical direction as long as they do not violate any of the forbidden moves. Such moves are called \textit{vertical moves}.
    \item \textit{Swing Moves}. An endpoint can also swing to the right or the left like a pendulum (see Fig. \ref{fig:swing}) as long as the downward orientation on the moving arc is preserved, and the forbidden moves are not violated.
\end{enumerate}

\begin{figure}[htp]
    \centering
    \includegraphics[width=7cm]{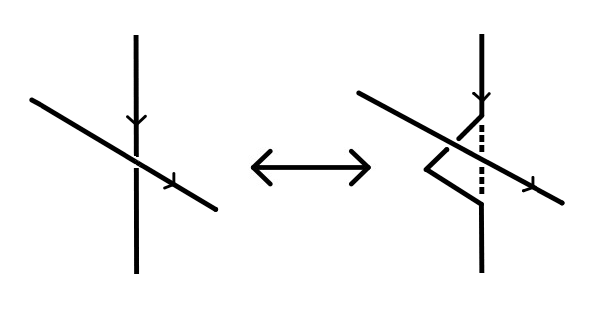}
        \caption{A planar $Δ$-move on a braidoid diagram.}
    \label{fig:Δ}
\end{figure}

\begin{figure}[htp]
    \centering
    \includegraphics[width=6cm]{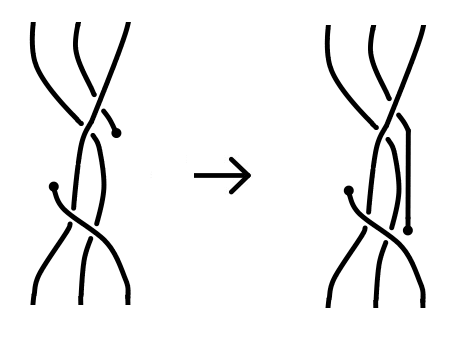}
        \caption{A vertical move on a braidoid diagram.}
    \label{fig:vertical}
\end{figure}

\begin{figure}[htp]
    \centering
    \includegraphics[width=7cm]{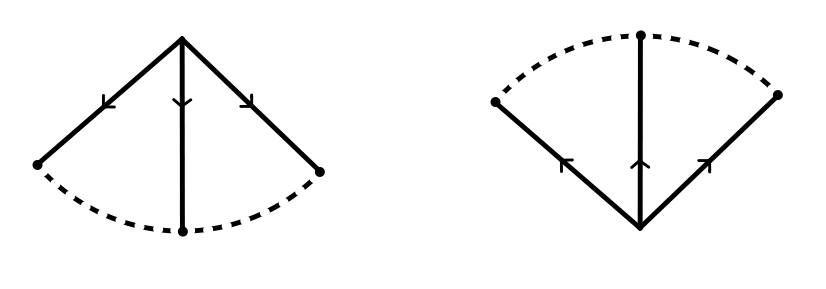}
        \caption{The swing moves.}
    \label{fig:swing}
\end{figure}

\begin{definition}
Two braidoid diagrams are said to be  \textit{isotopic} if one can be obtained from the other by a finite sequence of braidoid isotopy moves. An isotopy class of braidoid diagrams is called a  \textit{braidoid}. 
\end{definition}

\noindent For more details on braidoids cf. \cite{gugumcu2021braidoids}.

\subsection{Labeled braidoid diagrams, closure and restricted braidoid isotopy}

In order to define the closure of a braidoid diagram for obtaining a multi-knotoid we need to introduce the notion of a labeled braidoid diagram.

\begin{definition}
A \textit{labeled braidoid diagram} is a braidoid diagram for which every pair of corresponding ends is labeled either with $o$ or $u$. Each label indicates an over-passing or under-passing arc, respectively.
\end{definition}

\noindent We are now in a position to define the closure of a braidoid diagram. 

\begin{definition}[cf. \cite{gugumcu2021braidoids}]
    The \textit{closure} of  a labeled braidoid diagram $b$ is a planar (multi)-knotoid diagram obtained by connecting each pair of corresponding ends of $b$ with an embedded arc (with slightly tilted extremes) that runs along the right hand-side of the vertical line passing through the ends and in a distance arbitrarily close to this line. The connecting arc goes entirely {\it over} or entirely {\it under }  the rest of the diagram, according to the label of the ends. 
\end{definition}

\begin{figure}[htp]
    \centering
    \includegraphics[width=8cm]{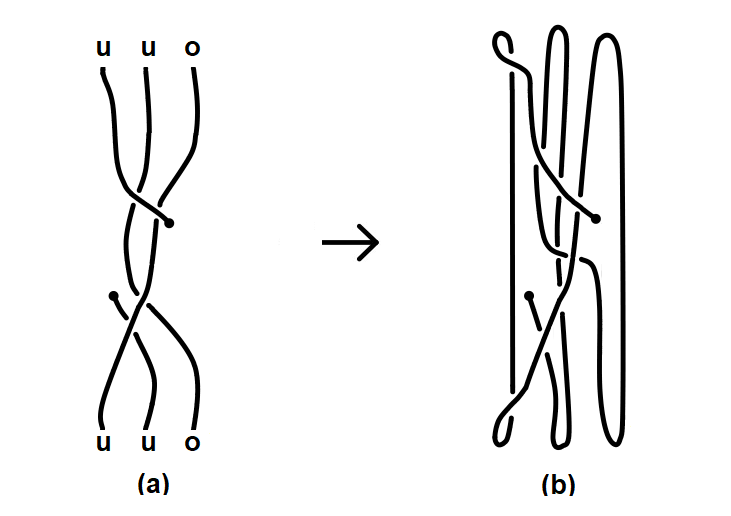}
        \caption{A labeled braidoid diagram whose closure is equivalent to the knotoid in Fig.~\ref{fig:knotoid}a.}
    \label{fig:closure}
\end{figure}

Labeled braidoid diagram isotopy differs slightly from usual braidoid diagram isotopy in that here the vertical lines between corresponding upper and lower ends play a more significant role when  considering their closures later on. In labeled braidoid isotopy we have {\it restricted swing moves} in the following sense: the range of a swing move is restricted, so that the endpoint will not swing outside the vertical strip determined by two closure arcs joining two consecutive pairs of corresponding braidoid ends. An endpoint crossing the imaginary line connecting corresponding ends can result in non-equivalent planar multi-knotoids, since a forbidden move will take place in the closure. As an example, the reader is referred to Fig. \ref{fig:sphernonspher}, where  two seemingly isotopic braidoids which are not restricted isotopic, give rise upon closure  to two non-equivalent planar knotoids. The non-equivalence is established by taking in both knotoids the under-closures, that is joining the endpoints with simple under-arcs    (note that the over-closures give in both cases the trivial knotoid).

\begin{figure}[htp]
    \centering
    \includegraphics[width=14cm]{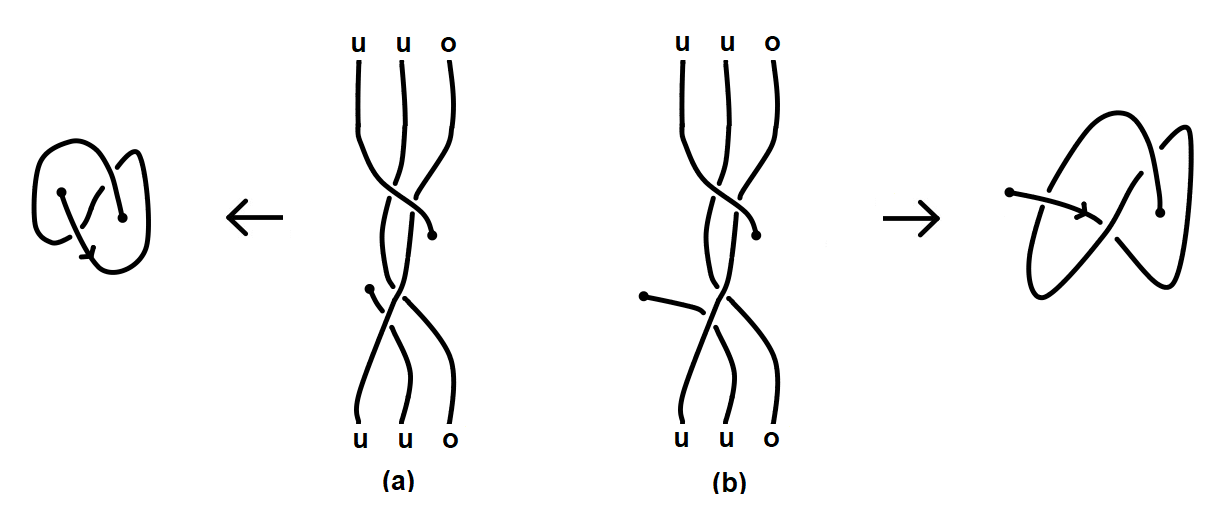}
        \caption{An example of non-isotopic braidoids in terms of restricted isotopy.}
    \label{fig:sphernonspher}
\end{figure}

\begin{definition}
    In the set of labeled braidoid diagrams, the braidoid $Δ$-moves (which include the braidoid Reidemeister moves) along with the vertical moves and the restricted swing moves, preserving at the same time the labeling, generate the \textit{restricted braidoid isotopy}. Isotopy classes of labeled braidoid diagrams under restricted braidoid isotopy are called \textit{labeled braidoids}.
\end{definition}

Upon closure, two restricted isotopic labeled braidoid diagrams generate equivalent multi-knotoid diagrams in $\mathbb{R}^2$, since due to the restricted swing moves no forbidden moves are caused. Hence, the closure operation is well-defined on labeled braidoids.

\subsection{Braidoiding knotoids}\label{subsec:braidoiding}

There exists an inverse operation to  labeled braidoid closure, which is an analogue of the classical Alexander theorem for braiding knot and link diagrams. Let $K$ be a piecewise-linear planar multi-knotoid diagram, endowed with its natural orientation, so that no edges run parallel to the $x$-axis. One can successively eliminate edges that are oriented upward by replacing each one with two new strands with corresponding ends, such that if we join the ends with a closure arc of the same label, the resulting multi-knotoid is equivalent to the original  multi-knotoid.\\
\indent More precisely, let $QP$ be an up-arc edge of a (piecewise linear) multi-knotoid diagram with respect to a given subdivision, where $Q$ denotes the initial subdivision point and $P$  the top-most  one. The right angled triangle labeled $o$ or $u$ according to the label of the up-arc itself, which lies below $QP$ and admits $QP$ as hypotenuse, and is a special case of a triangle enabling a $Δ$-move, is called the \textit{sliding triangle} of the up-arc $QP$. We denote the sliding triangle by $T(P)$, see Fig. \ref{fig:triangle}. The disk bounded by the sliding triangle $T(P)$ is utilized after cutting $QP$, for sliding downward the resulting lower sub-arc across it. See Fig. \ref{fig:triangle}. Α \textit{cut-point} of an up-arc is defined to be the point where the up-arc is cut to start a braidoiding move. We pick the top-most point $P \in QP$ as the cut-point of $QP$ for the braidoiding algorithm. A sliding triangle of a knotoid diagram is not allowed to contain an endpoint. By \cite{gugumcu2021braidoids}, given a point  subdivision of a piecewise linear multi-knotoid, we can always obtain a refinement such that: each up-arc is of the type \textit{under} or \textit{over}, depending on whether it crosses entirely over or entirely under other arcs of the diagram. Moreover, all sliding triangles are mutually disjoint and not containing endpoints. These last requirements are called the \textit{triangle conditions}.\\
\indent Figure \ref{fig:triangle} describes the process of a braidoiding move on an up-arc $QP$, labeled \textit{under}. The sliding triangle's label is then also $u$ and it means that the triangle lies entirely under any other edge of the diagram besides $QP$. Then we join $P$ with its projection on $\mathbb{R} \times \{1\}$ by a line segment that, in this case, passes entirely under any other edge of the diagram. We perform the exact analogous action on $A$ and its projection on $\mathbb{R} \times \{0\}$ and we connect $Q$ and and $A$ with a line segment $QA$ that also passes under any other edge of the diagram. We give the projections of $P$ and $A$ on $\mathbb{R} \times \{1\}$ and $\mathbb{R} \times \{0\}$ indication $u$ for when the braidoiding algorithm is finished these projections will function as braidoid ends labeled $u$. To end the move, we can also slide $A$ $ε$-close downwards since braidoid diagram strands need to be monotonically decreasing.\\
\indent When all up-arcs of $K$ are eliminated in the aforementioned way, the result will be a labeled braidoid diagram whose closure is planar equivalent to $K$. For more details cf. \cite{gugumcu2021braidoids} and \cite{lambropoulou1997markov}.  Namely we have the following theorem due to Gügümcü and Lambropoulou:

\begin{figure}[htp]
    \centering
    \includegraphics[width=13cm]{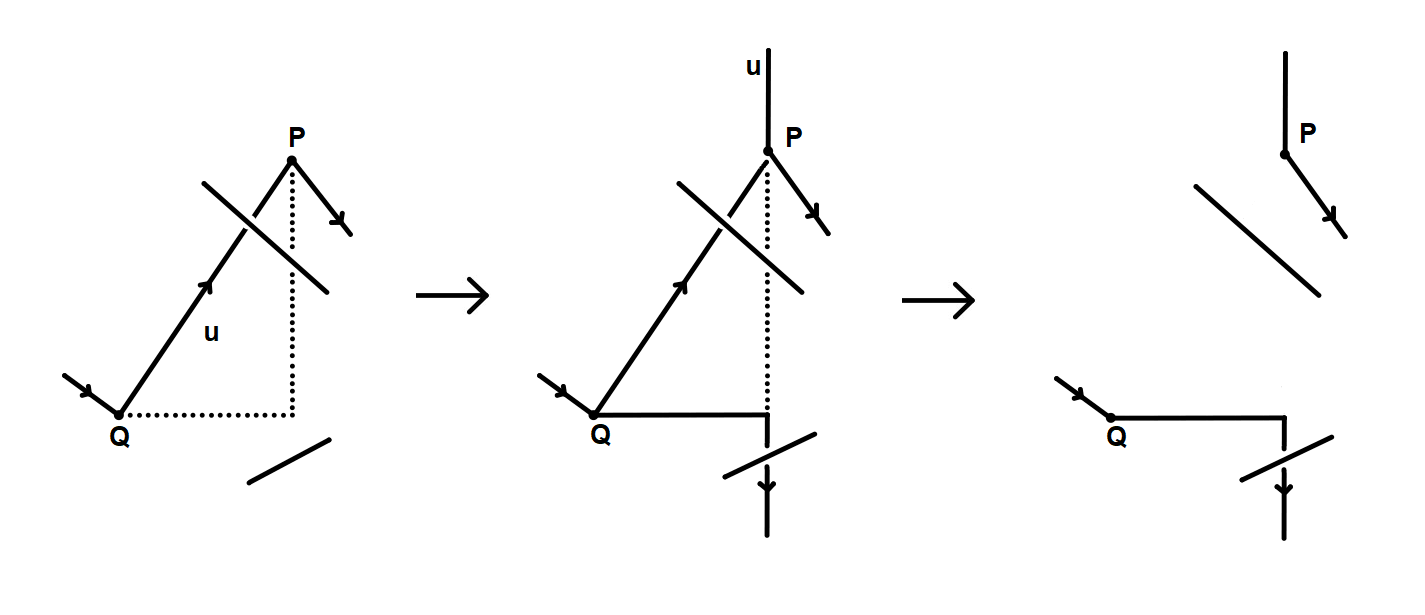}
        \caption{The sliding triangle of the $u$-labeled up-arc $QP$ with $P$ the cut point and the elimination of $QP$.}
    \label{fig:triangle}
\end{figure}

\begin{theorem}
\label{Alexander}
    A  multi-knotoid diagram in $\mathbb{R}^2$ can be turned into a labeled braidoid diagram whose closure is an equivalent multi-knotoid diagram.
\end{theorem}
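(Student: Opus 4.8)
The plan is to follow the braidoiding algorithm set out above and to verify three assertions: that the algorithm terminates, that its output is a genuine labeled braidoid diagram, and that the closure of this output is planar equivalent to the original multi-knotoid diagram $K$.

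First I would put $K$ in general position. Realize it as a piecewise-linear diagram oriented from leg to head (with its circle components oriented arbitrarily), perturb it so that no edge is parallel to the $x$-axis, and arrange that the $y$-coordinates of the subdivision points, crossings and endpoints are pairwise distinct. Then, invoking the refinement result of \cite{gugumcu2021braidoids}, I would pass to a subdivision fine enough that the triangle conditions hold: every up-arc is of pure type (entirely over, or entirely under, the rest of the diagram), the sliding triangles $T(P)$ are mutually disjoint, and none of them contains an endpoint. This is the step where the PL hypothesis and the genericity of the position are essential, and it is what allows every individual slide to be a legitimate $\Delta$-move.

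Next I would run the elimination loop. At each stage I pick an up-arc $QP$ with cut-point $P$, cut it at $P$, and slide the lower sub-arc downward across the disk bounded by its sliding triangle $T(P)$. Since $T(P)$ is disjoint from the other arcs and from the endpoints, this slide is a braidoid $\Delta$-move, hence a braidoid isotopy; it replaces the single up-arc by two downward strands meeting the top and bottom lines, and I label their new braidoid ends $o$ or $u$ according to the label of $QP$. A final $\varepsilon$-slide of the lower end makes every strand strictly monotone. The key bookkeeping observation is that each move removes one up-arc and introduces only downward-oriented material, so the number of up-arcs strictly decreases and the loop halts after finitely many steps; the circle components are braided exactly as in the classical Alexander theorem, while the single segment retains the leg and the head as the endpoints of its free strand.

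The part deserving the most care --- and the main obstacle --- is verifying the two claims that make the output meaningful. For the braidoid claim I would check that the terminal diagram has all strands monotonically decreasing with no local extrema, that the leg and head survive as the endpoints of the (unique) free strand, and that the braidoid ends can be isotoped to lie equidistantly on the top and bottom lines with none vertically aligned with an endpoint; crucially, no forbidden move is ever invoked, precisely because the sliding triangles were chosen to avoid the endpoints. For the closure claim I would argue that reconnecting each pair of corresponding ends by a closure arc carrying the matching label reverses the corresponding braidoiding move up to planar isotopy: the closure arc runs monotonically over (resp. under) the diagram exactly as the eliminated up-arc did, so composing the closure with the inverse slide returns that arc up to Reidemeister moves and planar isotopy away from the endpoints. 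Since each braidoiding move is reversible in this sense, an induction on the number of eliminated up-arcs shows that the closure of the final labeled braidoid diagram is planar equivalent to $K$, which is exactly the assertion of the theorem.
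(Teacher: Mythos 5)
Your proposal takes essentially the same route as the paper, which establishes this theorem (due to G\"ug\"umc\"u and Lambropoulou) via the braidoiding algorithm of Subsection~\ref{subsec:braidoiding}: pass to a piecewise-linear diagram in general position, refine the subdivision until the triangle conditions hold, eliminate the up-arcs one at a time using their sliding triangles while labeling the new corresponding ends, and check that each closure arc retracts back onto the eliminated up-arc, so the closure is planar equivalent to $K$. The only imprecision is your claim that $T(P)$ is ``disjoint from the other arcs'': the triangle conditions require the sliding triangles to be mutually disjoint and to avoid the endpoints, but a sliding triangle of pure type may well cross other strands (passing entirely over or entirely under them), and it is exactly this pure-type property --- not disjointness --- that lets the slide and its inverse be realized by Reidemeister moves and planar isotopy away from the endpoints.
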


\noindent As an example, the left-hand side of Fig.~\ref{fig:L1} illustrates a braidoid diagram of the knotoid in Fig.~\ref{fig:knotoid}a.

\begin{figure}[htp]
    \centering
    \includegraphics[width=8cm]{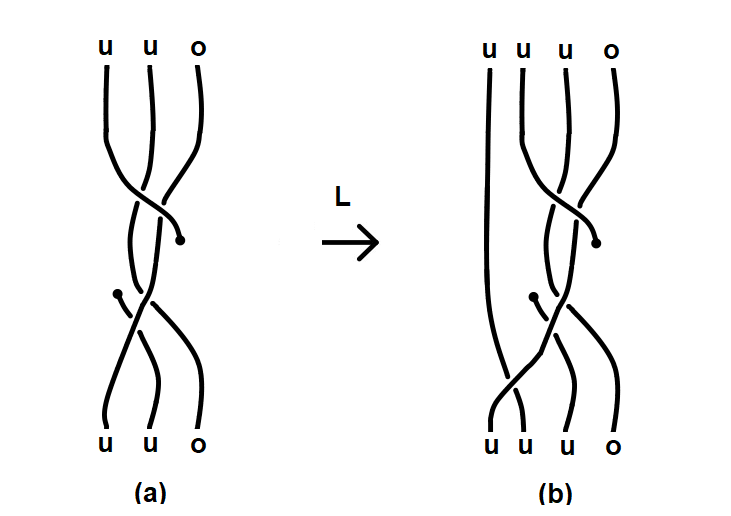}
        \caption{Two equivalent braidoid diagrams closing to the knotoid in Fig.~\ref{fig:knotoid}a.}
    \label{fig:L1}
\end{figure}

A sliding triangle's label can be influenced whether or not the up-arc in question contains a crossing with another strand of the multi-knotoid. However, with small planar isotopies it was shown in \cite{gugumcu2017knotoids} that when an up-arc contains a crossing it can be turned into a down-arc so that the braidoiding algorithm ignores it. Thus, all up-arcs are free and can be given any label. Chosing the same labeling $u$ for all up-arcs we obtain a uniform under labeling for the resulting braidoid diagram. Closing a braidoid diagram with arcs that pass entirely under the diagram is called the \textit{uniform closure} of the diagram.

\subsection{The $L$-equivalence on braidoids}

The $L$-moves were originally defined for classical braid diagrams in \cite{lambropoulou1997markov} by Lambropoulou, and they were used for proving a one-move analogue of the classical two-move Markov theorem \cite{lambropoulou1997markov}. In \cite{gugumcu2021braidoids} the $L$-moves are adapted to the category of braidoids and, in lack of algebraic structures for braidoids, are used for formulating and proving an   $L$-equivalence relation in the set of braidoids that corresponds to equivalence of planar knotoids. The braidoid   $L$-equivalence turns out to be far more subtle than the classical   $L$-equivalence, due to the presence of the fake swing moves and the forbidden moves in the theory.

\begin{definition}[L-moves]
    An $L$-move on a labeled braidoid diagram $b$ is the following operation:

    \begin{enumerate}
  \item Cut a strand of $b$ at an interior point which is not vertically aligned with a braidoid end, an endpoint or a crossing of $b$. The existence of such point can be ensured by a general position argument.
  \item Pull the resulting ends away from the cut-point toward the top and bottom respectively, keeping them vertically aligned with the cut-point, so as to create a new pair of braidoid strands with corresponding ends. The new sub-strands run both entirely \textit{over} or entirely \textit{under} the rest of the braidoid diagram, depending on the type of the $L$-move applied, namely $L_{over}$ or $L_{under}$-move. The two types are denoted by $L_o$ (see Fig. \ref{fig:L-moves}b) and $L_u$ (see Fig. \ref{fig:L-moves}a) respectively. So, an $L_o$-move resp. $L_u$-move comprises pulling the resulting sub-strands entirely over resp. under the rest of the diagram. 
  \item After an $L$-move is applied on a labeled braidoid diagram, the new pair of corresponding strands gets the labeling of the type of the $L$-move: If the strands are obtained by an $L_o$-move resp. $L_u$-move then they are labeled $o$ resp. $u$.
\end{enumerate}
\end{definition}

\begin{figure}[htp]
    \centering
    \includegraphics[width=13cm]{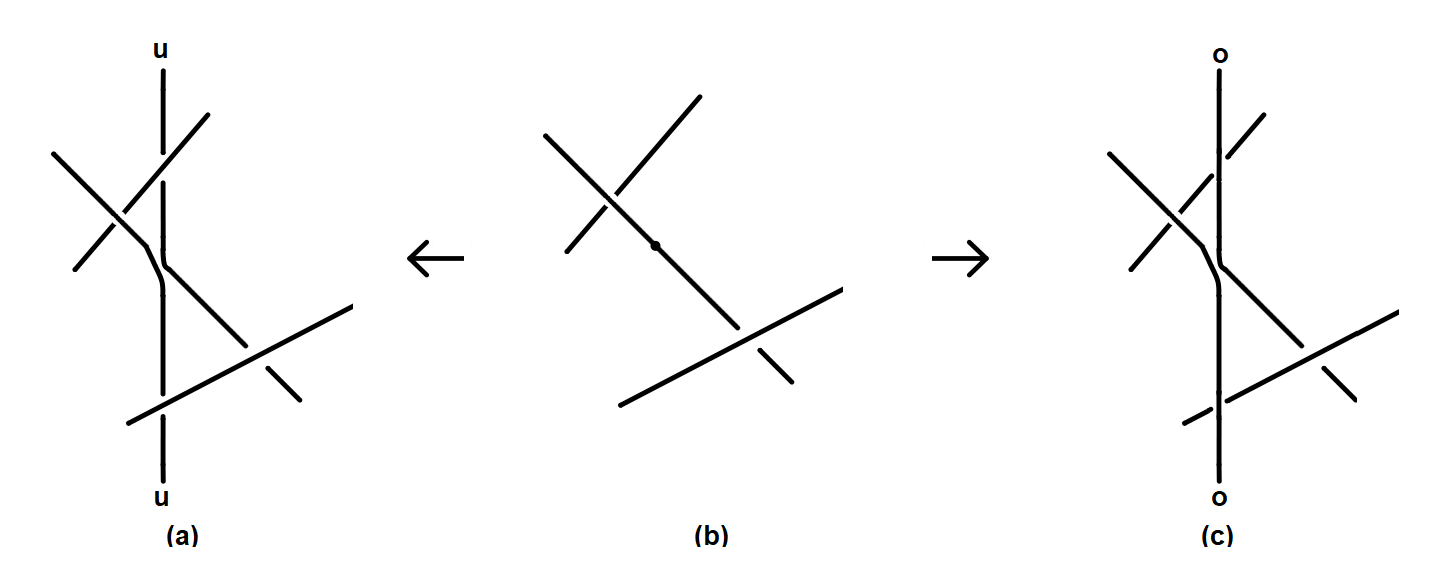}
        \caption{An $L_{under}$- and an $L_{over}$-move at the same point of a braidoid.}
    \label{fig:L-moves} 
\end{figure}

Clearly, the closures of two labelled braidoids that differ by an $L$-move are equivalent multi-knotoids. 
As an example, Fig.~\ref{fig:L1} illustrates two $L$-equivalent braidoid diagrams of the knotoid in Fig.~\ref{fig:knotoid}a. 

\smallbreak
In the theory, we further have the fake forbidden moves: A \textit{fake forbidden move} on a labeled braidoid diagram $b$ is a move that is forbidden  on $b$, but which upon closure induces a sequence of fake forbidden moves on the resulting multi-knotoid diagram. See  Fig.~\ref{fig:fakes}(b)-(c) for an example of a fake forbidden move   on a labeled braidoid diagram.

Finally, a {\it fake swing move} on a labeled braidoid diagram is a swing move which is not restricted, in the sense that the endpoint surpasses the vertical line of a pair of corresponding ends, but in the closure it is undone by a sequence of swing and fake forbidden moves on the resulting multi-knotoid diagram. See Fig.~\ref{fig:fakes}(a)-(b) for an example of a fake swing move. 

\begin{figure}[htp]
    \centering
    \includegraphics[width=11cm]{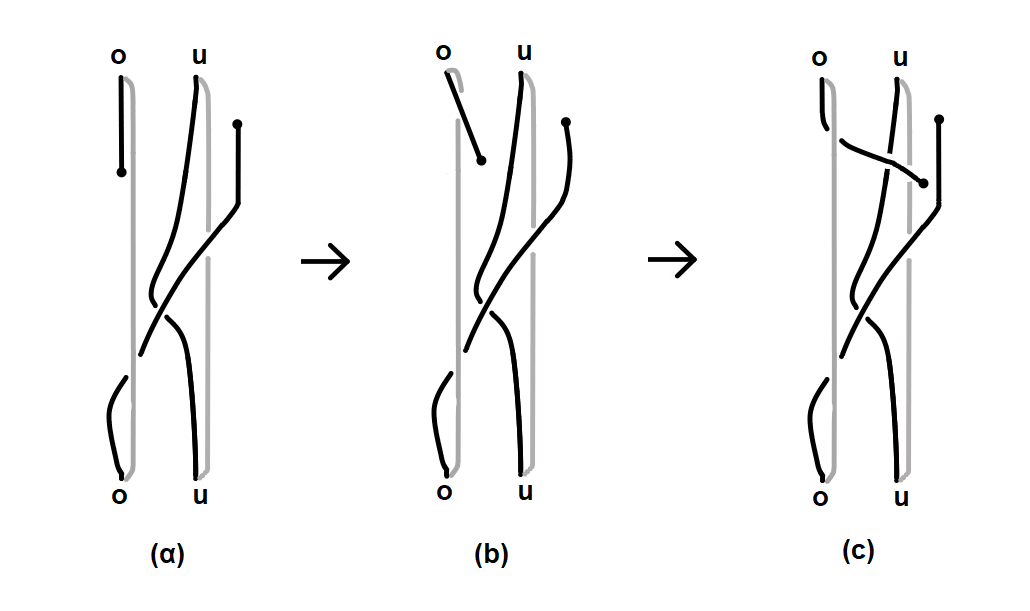}
        \caption{Fake swing moves and fake forbidden moves.}
    \label{fig:fakes}
\end{figure}

\begin{definition}[$L$-equivalence]\label{Def:L-equivalence}
    The $L$-moves together with labeled braidoid isotopy moves and fake swing moves generate an equivalence relation on labeled braidoid diagrams that is called the \textit{$L$-equivalence}, and is denoted by $\sim_{L}$.
\end{definition} 

The braidoiding algorithm induces a well-defined map from the set of equivalence classes of multi-knotoid diagrams in $\mathbb{R}^2$ to the set of $L$-equivalence classes of braidoid diagrams. Moreover, this map is the left and right inverse of the closure map from the set of $L$-equivalence classes of labeled braidoid diagrams to the set of equivalence classes of multi-knotoid diagrams, which is also a well-defined map. The above are based on the following lemmas, cf. \cite{gugumcu2021braidoids}:

\begin{lemma}
\label{br1}
    Let $K$ be a (multi)-knotoid diagram in $\mathbb{R}^2$ with a subdivision that satisfies the triangle conditions. Adding a subdividing point to an up-arc of $K$ and labeling the new up-arcs same as the labeling of the initial up-arc yields an $L$-equivalent labeled braidoid diagram.
\end{lemma}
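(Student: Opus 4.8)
The plan is to run the braidoiding algorithm on both subdivisions and to show that the two resulting labeled braidoid diagrams differ by exactly one $L$-move, up to restricted braidoid isotopy. Write $QP$ for the up-arc being subdivided, with $Q$ its bottom (initial) point and $P$ its cut-point, and let $\ell\in\{o,u\}$ be its label; by the symmetry between $L_o$ and $L_u$ I may assume $\ell=u$. Let $M$ be the new subdividing point, interior to the straight segment $QP$, so that $Q,M,P$ are collinear and the two new up-arcs $QM$ and $MP$ both carry the label $u$. Denote by $b$ the labeled braidoid diagram obtained by braidoiding the original subdivision and by $b'$ the one obtained after inserting $M$. Since the two subdivisions coincide outside $QP$, and braidoiding moves on disjoint sliding triangles are local and independent, $b$ and $b'$ agree away from the region affected by braidoiding $QP$ versus braidoiding the pair $\{QM,MP\}$; the whole argument is therefore local to that region.

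First I would check that the refined subdivision still satisfies the triangle conditions. The original sliding triangle $T(P)$ of $QP$ is, by hypothesis, disjoint from every other sliding triangle and contains no endpoint. I would take the sliding triangle of $MP$ (cut-point $P$) and the sliding triangle of $QM$ (cut-point $M$) both inside the original triangle $T(P)$: as right triangles with hypotenuses $MP$ and $QM$ lying below the segment $QP$, they are contained in $T(P)$ and meet only at the point $M$, so after the standard $\varepsilon$-perturbation of the feet they are mutually disjoint. Being subsets of $T(P)$, they are automatically disjoint from all other sliding triangles and free of endpoints. This is the usual refinement argument and it makes both braidoiding moves legitimate.

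The core of the proof is the local comparison of the two outputs. Braidoiding $QP$ cuts it at $P$ and produces one pair of corresponding, vertically aligned braidoid strands, all running under the diagram since $\ell=u$: a strand $s_1$ of the form $[\text{rest}]\!-\!Q\!-\!A\!-\!(\text{down to the bottom line})$ and a strand $s_2$ of the form $(\text{top line})\!-\!P\!-\![\text{rest}]$, where $A$ is the foot of $T(P)$ below $P$ (see Fig.~\ref{fig:triangle}). Braidoiding $MP$ and then $QM$ performs this cut-and-slide twice: the cut at $P$ creates one under-passing pair and the cut at $M$ a second one, and tracking the connectivity shows that $s_2$ survives unchanged while $s_1$ is replaced by two strands — one retaining the connection into $Q$ and running down to a new bottom end, and one full descending strand running under everything from a new top end (the projection of $M$) down to the old bottom end. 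This is precisely the effect of applying a single $L_u$-move to $s_1$ at the interior cut-point corresponding to $M$: cutting $s_1$ and pulling the two ends to the top and bottom lines as a new under-passing, vertically aligned pair (cf. Fig.~\ref{fig:L-moves}a). The new pair is automatically a corresponding pair, because the braidoiding move aligns the two ends it creates, and it receives the label $u$, matching both the label of the $L_u$-move and the common label of $QM$ and $MP$. Hence $b'$ is obtained from $b$ by one $L_u$-move, so $b\sim_L b'$ by Definition~\ref{Def:L-equivalence}.

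The step I expect to be the main obstacle is this last local identification. One must follow carefully the heights and connectivity of all arcs created by braidoiding the two sub-arcs — in particular that sliding $M$ downward while braidoiding $MP$ does not obstruct the subsequent braidoiding of $QM$ — and then recognise the resulting local picture as \emph{exactly} one $L_u$-move applied to $s_1$, rather than as a more complicated combination of moves. The independence of braidoiding moves on the disjoint triangles obtained above, together with the fact that admissible reorderings of braidoiding moves yield restricted-isotopic diagrams, is what makes this identification clean. The remaining discrepancies — the $\varepsilon$-slides of the feet, the precise routing of the under-passing strands, and the equidistant placement of the braidoid ends on the top and bottom lines — are then absorbed by vertical and $\Delta$-moves, all of which are part of restricted braidoid isotopy and hence of $L$-equivalence.
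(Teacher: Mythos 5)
Your proof is correct and follows the intended argument: the paper states this lemma without proof (deferring to \cite{gugumcu2021braidoids}), and the standard justification is exactly your identification of the extra subdivision point with a single $L$-move of the same label, with the residual discrepancies absorbed by labeled braidoid isotopy. The local bookkeeping --- the two new sliding triangles sitting inside $T(P)$, the strand through $P$ surviving unchanged, and the new vertically aligned under-passing (or over-passing) pair created at the cut-point corresponding to $M$ --- is traced accurately.
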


\begin{lemma}
\label{br2}
    Labeling a free up-arc either with $o$ or $u$ does not change the resulting labeled braidoid diagram up to  $L$-equivalence.
\end{lemma}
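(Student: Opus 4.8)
The plan is to show that labeling a free up-arc with $o$ versus $u$ produces two labeled braidoid diagrams related by an $L$-equivalence. The key observation is that a free up-arc is, by definition, one that can be isotoped so as to contain a crossing that turns it into a down-arc, meaning the braidoiding algorithm's treatment of it is insensitive to its position relative to the other strands it crosses. Consequently the strand it gives rise to can be pulled entirely over \emph{or} entirely under the rest of the diagram, and both of these choices arise from $L$-moves of the respective type.

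\textbf{First} I would set up the comparison explicitly. Fix a multi-knotoid diagram $K$ with a subdivision satisfying the triangle conditions, and let $QP$ be the free up-arc in question. Running the braidoiding algorithm with the label $u$ produces a pair of braidoid strands connecting $P$ and (the slid image of) $Q$ to their projections on the top and bottom lines, both passing entirely \emph{under} the rest of the diagram; call the resulting labeled braidoid $b_u$. Running it with the label $o$ instead produces the analogous pair passing entirely \emph{over}, giving $b_o$. Away from this single up-arc, the two diagrams are identical, so it suffices to exhibit a local $L$-equivalence converting the under-passing strand pair into the over-passing one.

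\textbf{Next} comes the heart of the argument. Starting from $b_u$, I would apply an $L_o$-move at an interior point of one of the two new $u$-labeled strands, cutting it and pulling the resulting sub-strands entirely over the diagram; this introduces a new $o$-labeled pair. Because the original up-arc is free, the under-strand it replaced can then be absorbed back — the freeness lets us isotope past the crossings so that the leftover $u$-labeled piece is removable by braidoid isotopy (a combination of $\Delta$-moves and the reverse of the pull that created it), leaving only the $o$-labeled strand. In effect, the under-crossing pair and the over-crossing pair at a free up-arc differ by an $L$-move followed by restricted braidoid isotopy, which is precisely $L$-equivalence. Lemma~\ref{br1} can be invoked here to handle any subdivision bookkeeping, since adding or removing subdividing points on an up-arc preserves the $L$-equivalence class.

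\textbf{The main obstacle} I anticipate is verifying that the freeness of the up-arc genuinely lets one trade over for under \emph{without violating the forbidden moves or disturbing the labels of the other corresponding ends}. One must check carefully that the isotopy removing the spurious strand does not force an endpoint across a closure line (which would introduce a genuine, non-fake forbidden move) and that the crossing data introduced by the freeness cancels correctly; this is exactly the delicate point where the fake swing and fake forbidden moves of the theory are needed to undo apparent obstructions in the closure. Once this local interchange is established, the global statement follows immediately, since the two braidoiding outputs agree everywhere except along the single free up-arc.
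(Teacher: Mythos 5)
The paper does not actually prove this lemma itself --- it is recalled from \cite{gugumcu2021braidoids} as background --- so your argument has to be measured against the standard one. Your overall direction (the two labelings differ by $L$-moves) is correct, but two points are genuinely off. First, your ``key observation'' misstates what a free up-arc is: a free up-arc is one containing \emph{no} crossings with other arcs of the diagram. It is the \emph{non}-free up-arcs (those carrying a crossing) that are isotoped into down-arcs so that the braidoiding algorithm ignores them; what remains, and what the lemma is about, are the crossing-free up-arcs, and it is precisely the absence of crossings on $QP$ that makes both the $o$- and the $u$-labeled sliding triangles simultaneously admissible.

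Second, and more seriously, the mechanism of your interchange does not work as stated. After performing an $L_o$-move on $b_u$ you propose to absorb the leftover $u$-labeled pair ``by braidoid isotopy (a combination of $\Delta$-moves and the reverse of the pull that created it).'' No braidoid isotopy can remove a pair of corresponding strands terminating on the top and bottom lines: $\Delta$-moves, vertical moves and swing moves all preserve the number of braidoid ends. That pair can only be removed by an \emph{inverse} $L_u$-move --- which is legitimate inside $L$-equivalence, but it is not isotopy, and calling it isotopy hides the actual step. The clean argument is shorter: since $QP$ is free, the pair of vertical strands produced by braidoiding it with label $u$ (resp.\ $o$) passes entirely under (resp.\ over) the rest of the diagram and is vertically aligned at the $x$-coordinate of the cut-point $P$; hence $b_u$ and $b_o$ are each \emph{literally} the result of an $L_u$-, resp.\ $L_o$-, move applied at the same interior point of the braidoid diagram $b'$ obtained by contracting $QP$. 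Thus $b_u \sim_L b' \sim_L b_o$, with no residual strand to dispose of. Your closing worry about forbidden moves is reasonable but is handled by the triangle conditions (sliding triangles are disjoint from each other and from the endpoints), not by fake swing or fake forbidden moves, which play no role in this lemma.
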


\begin{corollary}
    If we have an appropriate choice of relabelling the up-arcs of $K$ resulting from a further subdivision on $K$ then, by Lemmas~\ref{br1} and~\ref{br2}, the resulting labeled braidoid diagrams are $L$-equivalent.
\end{corollary}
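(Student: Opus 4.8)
The plan is to reduce the corollary to a direct combination of Lemmas~\ref{br1} and~\ref{br2}, organised as an induction on the number of subdivision points added in passing to the refinement, with the transitivity of $\sim_{L}$ doing the bookkeeping.

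First I would observe that any refinement of a given point-subdivision of $K$ is obtained by inserting finitely many new subdivision points, one at a time. I would order these insertions arbitrarily and carry them out successively, at each stage giving the two up-arcs created by splitting an up-arc the \emph{same} label as their parent up-arc. By Lemma~\ref{br1} each single insertion produces a labeled braidoid diagram $L$-equivalent to the one preceding it; since $\sim_{L}$ is an equivalence relation, after all the insertions the resulting braidoid diagram carrying this inherited labeling is $L$-equivalent to the braidoid obtained from the original subdivision.

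Next I would compare the inherited labeling with the prescribed \emph{appropriate} relabeling of the refined up-arcs. On an up-arc whose label is forced — one that genuinely passes entirely over or entirely under another strand — any labeling compatible with the triangle conditions must assign the same label, so the inherited and appropriate labels agree there. The two labelings can therefore disagree only on \emph{free} up-arcs, namely those that (after the small planar isotopies of Section~\ref{subsec:braidoiding}) contain no constraining crossing and hence may carry either label. Applying Lemma~\ref{br2} once for each such free up-arc converts the inherited label into the appropriate one while preserving the $L$-equivalence class. Composing this with the chain from the previous paragraph, and invoking transitivity of $\sim_{L}$ once more, yields the asserted $L$-equivalence between the braidoid from the original subdivision and the braidoid from the refined subdivision with the appropriate relabeling.

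The main point to justify is the claim that the inherited and appropriate labelings can differ only on free up-arcs; this is exactly where the hypothesis that the relabeling is \emph{appropriate}, i.e. a valid labeling for a subdivision meeting the triangle conditions, is needed, since on a non-free up-arc the over/under behaviour of the strand leaves no choice and Lemma~\ref{br2} applies precisely on the remaining free arcs. A secondary point is to perform the insertions of Lemma~\ref{br1} in an order for which every intermediate subdivision still satisfies the triangle conditions; this can always be arranged, since adding a point to an up-arc only subdivides and shrinks its sliding triangle, keeping the sliding triangles mutually disjoint and clear of endpoints.
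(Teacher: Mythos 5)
Your argument is correct and is exactly the combination the paper intends: the paper states this corollary without a separate proof, deriving it directly from Lemma~\ref{br1} (inserting subdivision points one at a time with inherited labels) and Lemma~\ref{br2} (toggling labels on free up-arcs), with transitivity of $\sim_L$ implicit. Your elaboration, including the check that intermediate subdivisions still satisfy the triangle conditions, fills in the details faithfully and introduces no new ideas beyond what the paper relies on.
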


\begin{lemma}
    Two labeled braidoid diagrams that are obtained with respect to any two subdivisions $S_1$, $S_2$ on a multi-knotoid diagram $K$, satisfying the triangle conditions, are $L$-equivalent.
\end{lemma}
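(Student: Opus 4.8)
The plan is to use a common refinement of the two subdivisions and reduce everything to the single-point subdivision results already established. Write $b(S)$ for the labeled braidoid diagram produced by the braidoiding algorithm from a subdivision $S$ of $K$ that satisfies the triangle conditions. Since both subdivisions include the corner points of the piecewise linear diagram $K$ (in particular all local maxima and minima), each arc between consecutive subdivision points is a single straight, hence monotone, segment, and the two subdivisions differ only by additional points placed on these monotone edges. Consequently the set-theoretic union $S_1 \cup S_2$ is again a point subdivision of $K$ refining each $S_i$, in which every edge inherits the up/down character it had before: adding a point to a down-edge leaves it a down-edge, and adding a point to an up-edge splits it into two up-edges.

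The key reduction is the claim that if $S'$ refines $S$ (both satisfying the triangle conditions), then $b(S) \sim_L b(S')$. I would prove this by induction on the number of points of $S' \setminus S$, added one at a time. If the new point lies on a down-arc, the algorithm performs no braidoiding move there and the output diagram is literally unchanged, so the two braidoids are identical. If it lies on an up-arc $QP$, then $QP$ is split into two up-arcs whose sliding triangles are the two right sub-triangles of $T(P)$ cut off by the vertical line through the new point; these are contained in $T(P)$ and meet only at that point, so the triangle conditions are preserved at this stage and Lemma~\ref{br1} applies, giving an $L$-equivalent braidoid once the split is performed with the inherited labels. Should the triangle conditions force the algorithm to assign to the new sub-arcs labels differing from those inherited from $S$, Lemma~\ref{br2} together with the Corollary above absorbs this discrepancy up to $L$-equivalence. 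Iterating over all added points yields $b(S) \sim_L b(S')$.

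With the claim in hand the statement follows quickly. The union $S_1 \cup S_2$ need not itself satisfy the triangle conditions (distinct sliding triangles may now overlap), but by the refinement procedure recalled in Section~\ref{subsec:braidoiding} it can be further refined to a subdivision $S_3 \supseteq S_1 \cup S_2$ that does satisfy them. Since $S_3$ refines each of $S_1$ and $S_2$, the claim gives $b(S_1) \sim_L b(S_3)$ and $b(S_2) \sim_L b(S_3)$, whence $b(S_1) \sim_L b(S_2)$ by the symmetry and transitivity of $\sim_L$. I expect the main obstacle to be the bookkeeping of the triangle conditions along the refinement: one must verify that each intermediate subdivision remains a legitimate input to the braidoiding algorithm (sliding triangles mutually disjoint and endpoint-free), and that the relabeling freedom of Lemma~\ref{br2} is exactly what reconciles the labels chosen by the algorithm on $S_3$ with those inherited from $S_1$ and $S_2$. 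The endpoints require only a brief separate check and cause no trouble, since a sub-triangle of an endpoint-free sliding triangle is again endpoint-free.
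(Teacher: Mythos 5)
Your argument is essentially the standard one: the paper itself gives no proof of this lemma (it is recalled from \cite{gugumcu2021braidoids}), and the proof there proceeds exactly as you propose, by passing to a common refinement of $S_1$ and $S_2$ satisfying the triangle conditions and reducing to Lemmas~\ref{br1} and~\ref{br2} one added point at a time. The only imprecision is minor: the two new sliding triangles are not literally the pieces of $T(P)$ cut off by the vertical line through the new point (one of those pieces is a trapezoid), but they are contained in $T(P)$ and meet only at the new point, which is all your disjointness argument needs.
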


\begin{lemma}
    Two multi-knotoid diagrams in $\mathbb{R}^2$ are related to each other by planar isotopy and Reidemeister moves away from endpoints only if the corresponding labeled braidoid diagrams are related to each other by labeled braidoid isotopy moves, $L$-moves, fake swing moves or fake forbidden moves.
\end{lemma}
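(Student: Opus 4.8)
The plan is to reduce the statement to a finite check on the generating moves of planar equivalence, and then to verify each case locally using the subdivision-independence already established. Since two multi-knotoid diagrams with $K_1 \simeq_{\mathbb{R}^2} K_2$ are related by a finite sequence of elementary planar isotopies and single Reidemeister moves RI, RII, RIII performed away from the endpoints, it suffices by transitivity of $\sim_L$ to treat the case where $K_1$ and $K_2$ differ by exactly one such elementary move, supported in a small disk $D$. First I would put both diagrams in piecewise-linear general position and invoke the preceding lemmas: by Lemma~\ref{br1} and the subdivision lemmas, the braidoid produced by the braidoiding algorithm is independent, up to $L$-equivalence, of the chosen subdivision satisfying the triangle conditions. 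This freedom lets me choose subdivisions of $K_1$ and $K_2$ that agree outside $D$ and whose sliding triangles outside $D$ are identical, so that the two resulting braidoids coincide away from the braidoiding data created inside $D$; the entire argument then localizes to $D$.

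Next I would run through the planar-isotopy cases. If the elementary isotopy leaves the up-arc/down-arc partition of the strands unchanged (it only slides a monotone arc), then after matching subdivisions the two braidoids are literally related by a braidoid $\Delta$-move or a vertical move, and we are done. The essential planar-isotopy case is when the isotopy creates or destroys a local maximum or minimum, i.e.\ converts a down-arc into an up-arc or conversely. Converting a down-arc into an up-arc introduces one new up-arc, on which the braidoiding algorithm performs a cut-and-slide, producing one additional pair of corresponding braidoid ends running entirely over or under the rest of the diagram. I would identify this newly created pair of strands precisely with the strands produced by an $L_o$- or $L_u$-move at the corresponding interior point, so that $K_1$ and $K_2$ braidoid to diagrams differing by a single $L$-move; Lemma~\ref{br2} absorbs the choice of over/under label when the new up-arc is free. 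Swings and vertical displacements of an endpoint that stay within the vertical strips determined by the closure arcs are handled directly by restricted swing and vertical moves.

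For the Reidemeister moves I would again localize to $D$ and enumerate sub-cases according to the up/down types of the participating strands. In each sub-case the braidoiding algorithm turns the local RI, RII or RIII picture into a small braidoid tangle; comparing the tangles obtained before and after the move, I expect them to be related by braidoid $\Delta$-moves (which by definition include the braidoid Reidemeister moves away from endpoints) together with at most one $L$-move to reconcile any up-arc created by an upward-pointing strand in the move. This case analysis is routine but lengthy, and mirrors the classical Reidemeister-to-$L$-move translation of Lambropoulou--Rourke, adapted to the monotone-strand setting.

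The main obstacle, as in the knotoid theory generally, is the interaction of the local move with the endpoints and with the closure strips. When the supporting disk $D$ abuts an endpoint, the braidoiding algorithm cannot place a sliding triangle over that endpoint, so the naive local comparison fails and the two braidoids differ by a move that is \emph{forbidden} as a braidoid isotopy. The key point to verify carefully is that every such forbidden-looking discrepancy is in fact a fake forbidden move or a fake swing move: upon closure it unravels into a legitimate sequence of swing and fake forbidden moves on the multi-knotoid, exactly as in Fig.~\ref{fig:fakes}. Establishing that the closure genuinely undoes each such move --- and that no \emph{true} forbidden move is ever required --- is the delicate heart of the proof, and is precisely what forces the inclusion of fake swing and fake forbidden moves in the definition of $\sim_L$. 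Once this is checked in the finitely many local configurations where $D$ meets an endpoint or crosses a closure strip, the lemma follows by concatenating the per-move $L$-equivalences.
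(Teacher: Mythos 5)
The paper does not actually prove this lemma: it is one of the results recalled from Gügümcü--Lambropoulou (cf.\ \cite{gugumcu2021braidoids}) on which the braidoid $L$-equivalence theorem rests, so there is no in-paper argument to compare yours against line by line. That said, your outline follows the same architecture as the proof in the cited source (and its classical ancestor in Lambropoulou--Rourke): reduce to a single elementary move supported in a disk, use the subdivision-independence lemmas (Lemmas~\ref{br1} and~\ref{br2} and the two that follow) to match the braidoiding data outside the disk, translate a change of the up-arc/down-arc partition into an $L$-move, translate Reidemeister moves into braidoid $\Delta$-moves plus $L$-moves, and reserve the fake swing and fake forbidden moves for the configurations where the disk meets an endpoint or an endpoint crosses a closure strip. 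So the approach is the right one.

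The honest caveat is that your text is a plan rather than a proof: the step you yourself call ``the delicate heart'' --- verifying that every forbidden-looking discrepancy arising near an endpoint is genuinely a fake forbidden or fake swing move, i.e.\ that it is undone upon closure and that no true forbidden move is ever needed --- is flagged but not carried out, and this is exactly where the content of the lemma lives. Without enumerating those finitely many endpoint configurations (swinging of an endpoint across the vertical line of a pair of corresponding ends, a sliding triangle that would otherwise contain an endpoint, an RI/RII move adjacent to the head or leg) the argument is incomplete. One small imprecision to fix as well: fake forbidden moves are not part of the definition of $\sim_L$ in Definition~\ref{Def:L-equivalence} (which lists only $L$-moves, labeled braidoid isotopy and fake swing moves); they appear as an additional item in the statement of this lemma, so your closing sentence attributing them to the definition of $\sim_L$ should be reworded.
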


\noindent We can now state  the following theorem due to Gügümcü and Lambropoulou \cite{gugumcu2021braidoids}:

\begin{theorem}
\label{Nesli}
    The closures of two labeled braidoid diagrams are equivalent multi-knotoid diagrams in $\mathbb{R}^2$ if and only if the labeled braidoid diagrams are related to each other via $L$-equivalence moves.
\end{theorem}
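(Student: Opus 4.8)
This is a Markov-type theorem, so the plan is to prove the two implications separately: the ``if'' direction by a direct generator-by-generator verification, and the ``only if'' direction by extracting a one-sided inverse to closure from the braidoiding algorithm. Write $\widehat{b}$ for the closure of a labeled braidoid diagram $b$. For sufficiency, suppose $b_1 \sim_L b_2$; since $L$-equivalence is generated by the $L$-moves, the labeled braidoid isotopy moves, and the fake swing moves (Definition~\ref{Def:L-equivalence}), it suffices to treat each generator. For an $L$-move this is immediate, as already observed in the text: the two new sub-strands run entirely over or under the rest of the diagram and are joined by a closure arc of the matching label, so on closure they form a kink removed by Reidemeister moves. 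For labeled braidoid isotopy, the restriction on swing moves is exactly what prevents an endpoint from crossing a closure arc, so no forbidden move is induced and the closures differ only by planar isotopy and Reidemeister moves away from the endpoints. Finally a fake swing move is, by definition, undone in the closure by swing moves and fake forbidden moves, both of which are planar equivalences of multi-knotoid diagrams. Hence every generator preserves the planar-equivalence class of the closure.

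For necessity I would package the four preparatory lemmas into a single statement: braidoiding descends to a well-defined map $\beta$ from planar-equivalence classes of multi-knotoid diagrams to $L$-equivalence classes of labeled braidoids. Lemmas~\ref{br1} and~\ref{br2}, with the corollary, show that $\beta$ is insensitive to the added subdivision points and to the labels on free up-arcs, and the subdivision lemma promotes this to independence of the whole subdivision, subject to the triangle conditions. The move-tracking lemma then shows that planar-equivalent multi-knotoid diagrams braidoid to labeled braidoids related by labeled braidoid isotopy, $L$-moves, fake swing moves and fake forbidden moves; to land inside $L$-equivalence one must additionally check that a fake forbidden move on a braidoid is itself realized by $L$-equivalence moves, which I would verify by closing up and identifying the induced move as a trivial fake forbidden move on the knotoid. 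The last ingredient is that $\beta$ inverts closure on one side: braidoiding $\widehat{b}$ recovers $b$ up to $L$-equivalence, because each closure arc is an up-arc that the algorithm converts into a new pair of corresponding strands, and this conversion is precisely an $L$-move. Granting these, the theorem follows at once: if $\widehat{b_1} \simeq_{\mathbb{R}^2} \widehat{b_2}$, then $\beta(\widehat{b_1}) \sim_L \beta(\widehat{b_2})$ by well-definedness of $\beta$, while $\beta(\widehat{b_i}) \sim_L b_i$ by the one-sided inverse property, and therefore $b_1 \sim_L b_2$.

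The main obstacle is concentrated in the move-tracking lemma, namely in showing that an arbitrary planar isotopy or Reidemeister move performed on a multi-knotoid diagram lifts through braidoiding to a finite sequence of $L$-equivalence moves. The delicate situations are those in which the isotopy drags an endpoint, or an arc, across a vertical line through a pair of corresponding ends or across a sliding triangle; there the two braidoidings can differ by what looks like a genuine forbidden move, and one must argue that in the braidoid it appears only as a fake forbidden or fake swing move that is undone upon closure. Controlling this interplay between the endpoints, the forbidden moves, and the internal choices of the braidoiding algorithm is exactly what makes braidoid $L$-equivalence more subtle than the classical $L$-equivalence, and it is where I expect the bulk of the work to lie.
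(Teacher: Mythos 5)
Your proposal takes essentially the same route as the paper, which does not prove Theorem~\ref{Nesli} itself but quotes it from Gügümcü--Lambropoulou and recalls exactly the supporting structure you use: a generator-by-generator check that closure respects $L$-equivalence for the ``if'' direction, and for the ``only if'' direction the fact that the braidoiding algorithm descends (via Lemmas~\ref{br1}, \ref{br2} and the subdivision and move-tracking lemmas) to a well-defined map on planar-equivalence classes that is a two-sided inverse of closure. Your flagged concern about absorbing fake forbidden moves into $L$-equivalence, and your identification of the move-tracking lemma as the hard step, agree with the paper's own remark that the braidoid $L$-equivalence is subtler than the classical one.
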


The aim of this work is to modify Theorem~\ref{Nesli} so that the new equivalence classes of labeled braidoids will be in bijection with spherical multi-knotoids. For this we shall employ Theorem~\ref{thm:sphericalstate}, so as to work, equivalently, in the planar category.

\section{Spherical knotoids and  braidoids in spherical state}\label{sec:sphericalknotoidsandbraidoidsinsphericalstate}

We recall from Subsection~\ref{subsec:planarsphericalknotoids} that there is a  bijection between the set $\mathcal{K}(S^2)$ of spherical knotoids and the  equivalence classes of planar knotoids in spherical state  
(Theorem~\ref{thm:sphericalstate}).  We shall now introduce a similar concept in the set of braidoids, and we shall prove that a normal knotoid can be represented unambiguously by the closure of a braidoid in spherical state.

\subsection{The spherical state of a braidoid diagram}

\begin{definition}\label{def:sphericalstate}
    Let $b \in \mathbb{R} \times [0,1]$ be a (labeled) braidoid diagram and let $b_0$ be the 4-valent graph obtained from $b$ by forgetting the over/under data on the crossings. The space  $\mathbb{R} \times [0,1] \setminus b_0$ has at least one unbounded region. We say that the braidoid diagram $b$ is in \textit{canonical spherical state} if the leg of $b$ lies in the interior of the left unbounded region of $\mathbb{R} \times [0,1] \setminus b_0$ and it does not lie between two consecutive braidoid ends (i.e. it does not lie within the interior of the vertical strip determined by the neighbouring vertical lines passing through two consecutive pairs of corresponding braidoid ends). Moreover, we say that the braidoid diagram $b$ is in \textit{spherical state} if it is $L$-equivalent (recall Definition~\ref{Def:L-equivalence}) to a braidoid diagram in canonical spherical state. Finally, we shall refer to the leg of a braidoid diagram $b \in \mathbb{R} \times [0,1]$ that is not in spherical state as \textit{bounded}.
\end{definition} 

An example of non-isotopic diagrams in terms of restricted isotopy, one in spherical state and one not in spherical state, is shown in Fig.~\ref{fig:sphernonspher}, where the closures of the braidoid diagrams generate non equivalent planar or spherical knotoid diagrams. To see this, one can connect the endpoints of the diagrams with an arc passing under all other arcs, to recover two knot diagrams. It is easy to see that the knot diagram generated by Fig.~\ref{fig:sphernonspher}(a) represents the trefoil while the knot diagram generated by Fig.~\ref{fig:sphernonspher}(b) represents a figure-$8$ knot.

\begin{remark}\label{rem:examplenonspher}
    The main idea behind the spherical state of a braidoid diagram is that when closed, it generates a planar multi-knotoid diagram in spherical state, which in turn corresponds to a normal multi-knotoid diagram in $S^2$. This causes the need to extend the notion of canonical spherical state to spherical state. Indeed, the situation of canonical spherical state in Definition~\ref{def:sphericalstate} is not the only one that upon closure generate normal multi-knotoid diagrams. Consider, for example, the case where the leg of a (labeled) braidoid lies between two consecutive ends $i,i+1$ but its $y$-coordinate is greater (or lesser) than that of any point in an arc that crosses transversely the strip between the endpoints $i,i+1$. The closure of such a diagram is also a normal multi-knotoid diagram in $S^2$. In this case, such a braidoid diagram is $L$-equivalent to a braidoid diagram in spherical state, since by a finite sequence of fake swing and fake forbidden moves one can move the leg to a  position of Definition~\ref{def:sphericalstate}. The reader is referred to Fig.~\ref{fig:examplenonspherical}. Furthermore, if the leg lies in the interior of the right unbounded region and not in the vertical strip of two consecutive endpoints, then it can be transferred to the left by a sequence of fake swing and fake forbidden moves, which in turn correspond to $L$-equivalent braidoid diagrams. This case is illustrated abstractly in Fig.~\ref{fig:remark}.
\end{remark} 

\begin{figure}[htp]
    \centering
    \includegraphics[width=6.5cm]{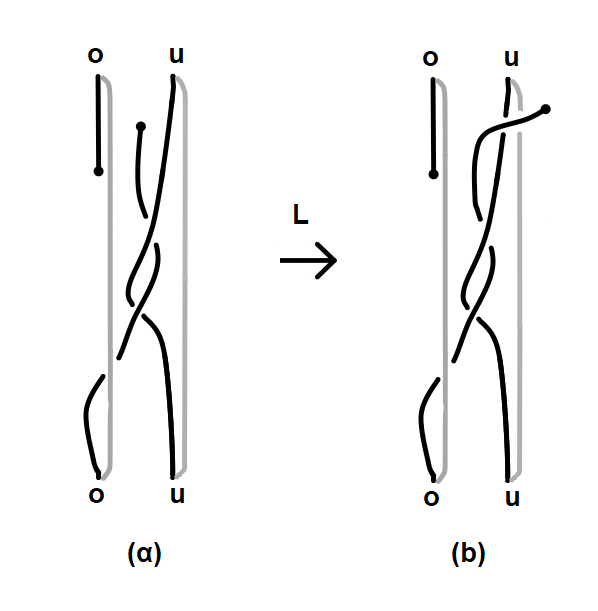}
    \caption{An example illustrating the first case of Remark~\ref{rem:examplenonspher}.}
    \label{fig:examplenonspherical}
\end{figure}

\begin{figure}[htp]
    \centering
    \includegraphics[width=9cm]{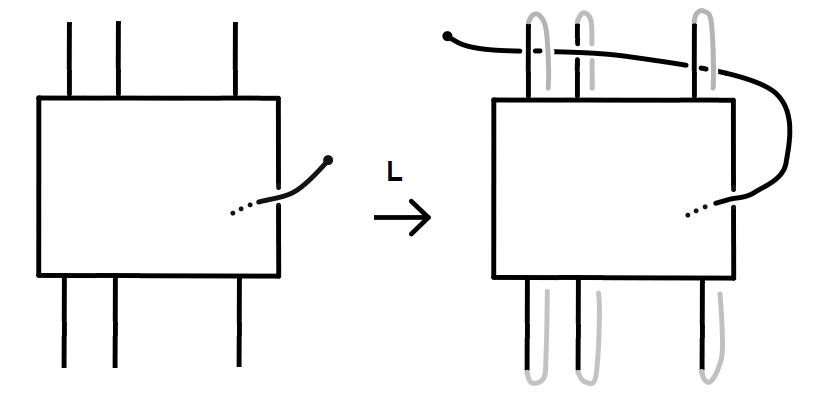}
    \caption{An example illustrating the second case of Remark~\ref{rem:examplenonspher}.}
    \label{fig:remark}
\end{figure}

\subsection{Braidoiding normal knotoid diagrams} 

   By Proposition~\ref{normal correspondence} every knotoid in $S^2$ has a normal representative, which corresponds bijectively to a planar knotoid in spherical state (Observation 6 and Definition~\ref{def:sphericalstate}), while the bijection between the set $\mathcal{K}(S^2)$ of spherical knotoids and the  equivalence classes of planar knotoids in spherical state  culminates in Theorem~\ref{thm:sphericalstate}.   Then the braidoiding algorithm described in Subsection~\ref{subsec:braidoiding} and planar isotopy lead to the following braidoiding result:
\begin{theorem} \label{thm:normalbraidoiding}
   Let $N$ be a normal multi-knotoid diagram in $S^2$ and $s(N)$ its image in $\mathbb{R}^2$ under the stereographic projection, which is a planar knotoid in spherical state. Then, $s(N)$ can be  deformed to the closure of a labeled braidoid diagram $b$ in canonical spherical state, which lies in the  planar equivalence class of $s(N)$.    
\end{theorem}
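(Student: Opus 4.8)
The plan is to use the freedom afforded by the spherical state of $s(N)$ to relocate the leg to the far upper-left of the diagram \emph{before} running the braidoiding algorithm, and then to verify that the algorithm leaves the leg in the position required by Definition~\ref{def:sphericalstate}. First I would put $s(N)$ into piecewise-linear general position with a subdivision satisfying the triangle conditions, so that the algorithm of Theorem~\ref{Alexander} applies; recall that since $s(N)$ is in spherical state, its leg lies in the unbounded region of $\mathbb{R}^2\setminus K_0$, and that the braidoiding algorithm fixes the endpoints, so the leg of $s(N)$ becomes the leg of the resulting braidoid.

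The key preparatory step is a planar isotopy exploiting this unbounded region. The unbounded component $U$ of $\mathbb{R}^2\setminus K_0$ is connected and contains points of arbitrarily small $x$- and $y$-coordinate, so I may pick a point $p\in U$ lying strictly above and strictly to the left of a bounding box of the whole diagram. I would then drag the leg to $p$ along an embedded path $\gamma\subset U$, routed so that its $x$-coordinate attains its minimum only at $p$. Since $\gamma$ misses $K_0$, this is a planar isotopy and does not change the planar equivalence class of $s(N)$. After it, the leg is simultaneously the topmost point (smallest $y$) and the leftmost point (smallest $x$) of the diagram, and $\gamma$ serves as the initial segment of the knotoid strand issuing from the leg.

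Next I would apply the braidoiding algorithm to eliminate the up-arcs, and two observations finish the argument. Because the leg is topmost, it cannot be contained in any sliding triangle, since every sliding triangle lies below its up-arc; thus the algorithm is unobstructed at the leg, and the forced initial downward direction of $\gamma$ creates no up-arc at the leg itself. Because the leg is leftmost and every new pair of corresponding braidoid ends is created at the $x$-coordinates of the top and bottom vertices of an eliminated up-arc — each of which strictly exceeds $x_{\mathrm{leg}}$, as the minimum of $x$ is attained only at $p$ — all braidoid ends of the resulting diagram $b$ lie strictly to the right of the leg. Consequently the leg of $b$ lies in the left unbounded region of $\mathbb{R}\times[0,1]\setminus b_0$ and outside every vertical strip bounded by two consecutive pairs of corresponding ends, so $b$ is in canonical spherical state. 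By Theorem~\ref{Alexander} the closure of $b$ is planar equivalent to the diagram we braidoided, which is planar-isotopic to $s(N)$; hence $b$ closes to a diagram in the planar equivalence class of $s(N)$, as required.

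The main obstacle I expect is controlling the long connecting arc $\gamma$: a priori it could wander in $x$ and thereby spawn braidoid ends to the left of the leg, or a sliding triangle could swallow the leg. Making the leg \emph{simultaneously} topmost and leftmost is exactly what rules out both pathologies — the topmost condition keeps all sliding triangles below and hence off the leg, while the leftmost condition (with the $x$-minimum of $\gamma$ attained only at the leg) keeps every end strictly to the right. A secondary technical point is that inserting $\gamma$ into the subdivision may disturb the triangle conditions; this is handled by a refinement of the subdivision exactly as in Lemma~\ref{br1}, which preserves the $L$-equivalence class and hence does not affect the closure.
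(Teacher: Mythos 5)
Your proposal is correct and follows essentially the same route as the paper's proof: use a planar isotopy within the unbounded region to relocate the leg to the far left of the diagram with its adjacent edge pointing downward, then run the braidoiding algorithm and observe that all resulting braidoid ends lie strictly to the right of the leg, so the output is in canonical spherical state. Your version merely makes explicit (topmost-and-leftmost positioning, the $x$-minimum of the dragging path attained only at the leg) the details that the paper compresses into ``sufficiently distant from the vertical lines used in the braidoiding algorithm.''
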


\begin{proof}
     Since $N$ is normal, then $s(N)$ is a planar diagram in spherical state.  We can assume $s(N)$ to be piecewise linear. By performing a planar isotopy, if necessary, the leg $l$ of $s(N)$ can be placed to the left  (or right) side of the diagram. Furthermore, by local isotopies, the edge of $s(N)$ directly attached to the leg can be assumed to be oriented downwards and sufficiently distant from the vertical lines to be used in the braidoiding algorithm. Applying, then, the  algorithm we obtain a labeled braidoid diagram $b$ whose leg lies in the left unbounded region of $\mathbb{R} \times [0,1] \setminus b_0$ (recall Definition~\ref{def:sphericalstate}) and does not lie in a vertical strip defined by consecutive braidoid ends, that is, a labeled braidoid diagram in canonical spherical state. See Fig.~\ref{fig:spherbraidoiding} for an abstraction of the steps described above. 
\end{proof}

\begin{figure}[htp]
    \centering
    \includegraphics[width=10cm]{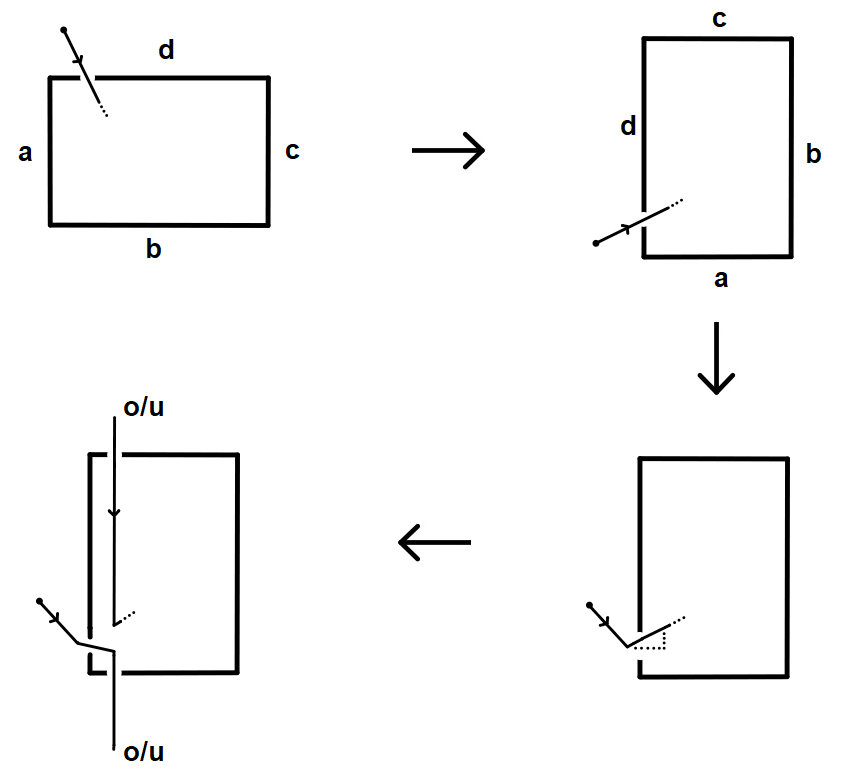}
        \caption{Preserving the leg in the unbounded region.}
    \label{fig:spherbraidoiding}
\end{figure}

\section{Spherical equivalence for labeled braidoids}\label{sec:Sphericalequivalenceforlabeledbraidoids}

 For the $L$-equivalence in the category of spherical multi-knotoids we need to add to the equivalence in Theorem~\ref{Nesli} extra diagrammatic moves that describe the effect of a spherical move between  multi-knotoid diagrams at the level of braidoid diagrams. 
 
\subsection{The spherical moves on braidoids}

We describe two types of moves that, along with the $L$-equivalence, shall formulate an equivalence relation in the set of labeled braidoid diagrams, such that the equivalence classes of labeled braidoid diagrams will be in bijection with equivalence classes of multi-knotoid diagrams in $S^2$. First we introduce some notation to help better describe our definitions. Let $p$ be a point in $\mathbb{R}^2$. By $x(p)$ we mean the projection of $p$ on the $x$-axis. Similarly $y(p)$ is the projection of $p$ on the $y$-axis.

\begin{definition}[sL I-move]
\label{slI}
    Let $b \in \mathbb{R} \times [0,1]$ be a labeled braidoid diagram not in spherical state. Let $l$ be the bounded leg of $b$  and $A$ be the strand of $b$ lying closest to $l$ (by closest we mean that the horizontal line segment $(-\infty,l]$ meets $A$ first from all other strands of $b$) that prevents $l$ from moving further toward the left unbounded region in the complement of $b_0$ (see Fig. \ref{fig:sphericalbraidoidmove}). Suppose also that $x(h)>x(l)$. Since we take $b$ to be in general position, $l$ can be assumed to  lie between the vertical lines of two consecutive ends $i$ and $i+1$. Pick now points $p_1$ and $p_2$ on $A$ so that the sub-arc $[p_1,p_2]$ of $A$ also lies entirely between the vertical lines of $i$ and $i+1$  and so that $y(p_2)<y(l)<y(p_1)$ and $x(l)<x(p_1)<x(p_2)$. The above conditions can always be satisfied by small planar isotopies.\\
    \indent Let $l_1$ and $l_2$ be the vertical lines that pass through $p_1$ and $p_2$ and meet $\mathbb{R} \times \{1 \} \cup \mathbb{R} \times \{0 \}$ at $\{x_t,x_b \}$ and $\{y_t,y_b \}$ respectively. Delete, next, the segment of $A$ between $p_1$ and $p_2$ and connect $p_2$ with $y_t$ with an arc that follows $l_2$ $ε$-close and passes under (or over) all other strands of $b$. Then, label the braidoid end at $y_t$ as $u$ (or $o$). Secondly, we connect $p_1$ with $x_b$ with an arc that follows $l_1$ $ε$-close and passes under (or over) all other strands of $b$ and label the braidoid end $x_b$ as $u$ (or $o$). Note that the newly formed strands are not corresponding.\\
    \indent Finally we connect $x_t$ with $y_b$ with an arc that consists of three sub-arcs $[x_t,d_1],[d_1,d_2]$ and $[d_2,y_b]$. The sub-arc $[d_1,d_2]$ lies entirely to the right of the diagram while the sub-arcs $[x_t,d_1]$ and $[d_2,y_b]$ lie on the upper and lower parts of the diagram respectively and pass through the strands respecting the labeling (i.e. passing over/under $u$/$o$-labeled ends as in Fig \ref{fig:sphericalbraidoidmove}.\\ 
    \indent Should $x(h)<x(l)$ we would reverse the direction of the move toward the right unbounded region. In the end, we  can simply move $l$ to the left region as in the second case of Remark \ref{rem:examplenonspher}. 
\end{definition}
 \noindent The reader is referred to Fig.~\ref{fig:close sl1} for the closure after an sL I-move.
 
\begin{figure}[htp]
    \centering
    \includegraphics[width=13cm]{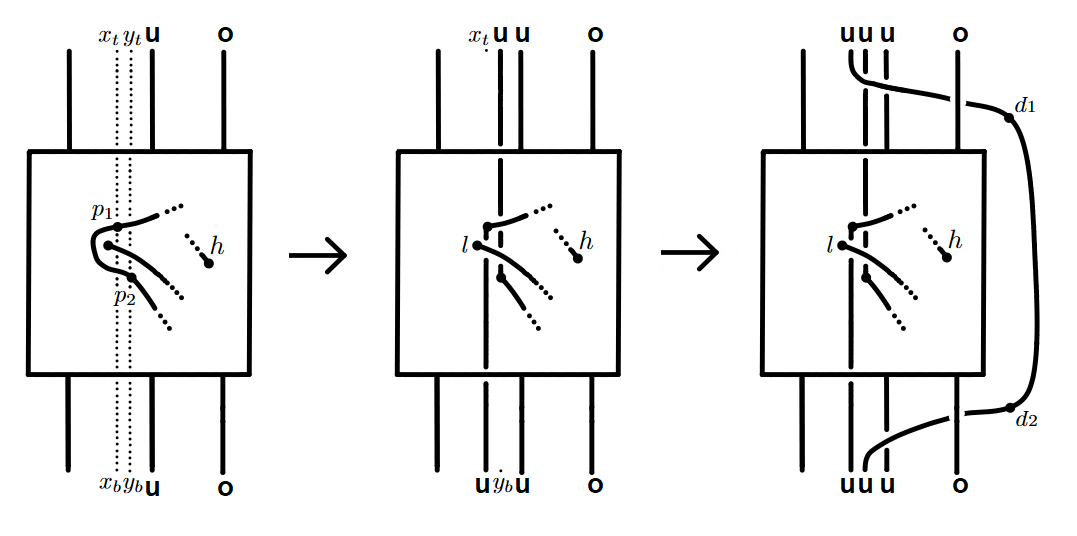}
        \caption{The sL I-move, assuming the head on the right-hand side.}
    \label{fig:sphericalbraidoidmove}
\end{figure}

 The second move is when the leg of a braidoid diagram $b$ lies in the vertical strip  between the $i$ and $i+1$ braidoid ends.

\begin{definition}[sL II-move] \label{slII}
    Let $b \in \mathbb{R} \times [0,1]$ be a labeled braidoid diagram with $n$ number of ends, not in spherical state so that the leg  of $b$ lies in the vertical strip between the $i$ and $i+1$ ends. Suppose again that $x(l)<x(h)$. Cut two small neighbourhoods of the upper and lower $i+1$ end, disjoint from the rest of the diagram, eliminating the $i$ end position and introduce a new end position to the right of the diagram. This will result in a new enumeration of the end positions where our newly introduced position will be $n$ and the braidoid ends $i+1$ to $n$ of the original diagram will shift from $i$ to $n-1$, however we do this without moving the braidoid diagram at all. We now join the new $n$ upper/lower end with the upper/lower end of the cut strand by two monotonically decreasing arcs that pass over or under the other strands respecting the labels (see Fig.~\ref{fig:sphericalbraidoidmove2}). The new end at position $n$ of the diagram will then be labeled either $o$ or $u$ according to the label of the original $i$ end. \\ 
    \indent Should $x(h)<x(l)$ we would reverse the direction of the move toward the left unbounded region. 
\end{definition}

\begin{figure}[htp] 
    \centering 
    \includegraphics[width=9cm]{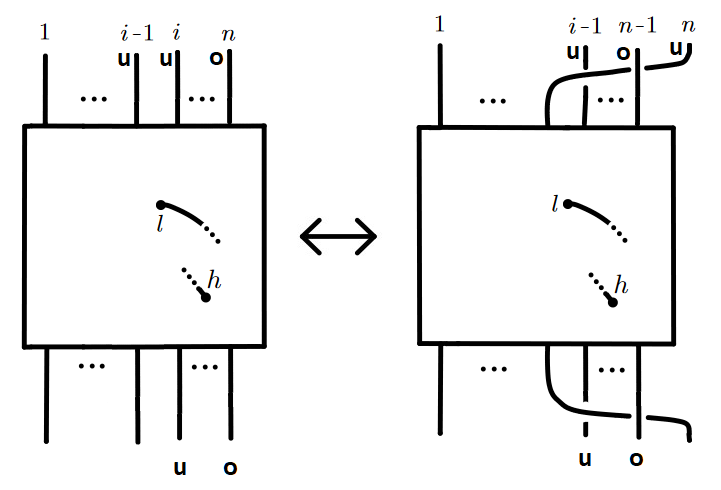} 
        \caption{The sL II-move, assuming the head on the right-hand side.} 
    \label{fig:sphericalbraidoidmove2} 
\end{figure} 

\begin{remark} 
Note that while in Definitions~\ref{slI} and~\ref{slII} we free the leg to be in accordance with \cite{turaev2012vladimir}, these operations can be done for any endpoint. 
\end{remark} 

We shall now give examples to better visualize the effect of the $sL$-moves.  Fig.~\ref{fig:SL1} shows an example of an sL I-move. One can see that the closures of the braidoid diagrams in Fig.~\ref{fig:L1} (as it was also the case for the ones in Fig.~\ref{fig:SL1}) represent the knotoid in Fig.~\ref{fig:knotoid}a. This knotoid  is trivial in $S^2$ whereas in $\mathbb{R}^2$ it is not, as shown in \cite{turaev2012vladimir}. To get from \ref{fig:L1}b to \ref{fig:SL1}c we perform two sL II-moves in the third and fourth end of diagram \ref{fig:L1}b. Then, by performing an sL I-move we can move the endpoint further to the left.  Further, the diagram in Fig. \ref{fig:L1}a is not $L$-equivalent with Fig. \ref{fig:SL1}c (or Fig. \ref{fig:SL1}d).

\begin{figure}[htp]
    \centering
    \includegraphics[width=9cm]{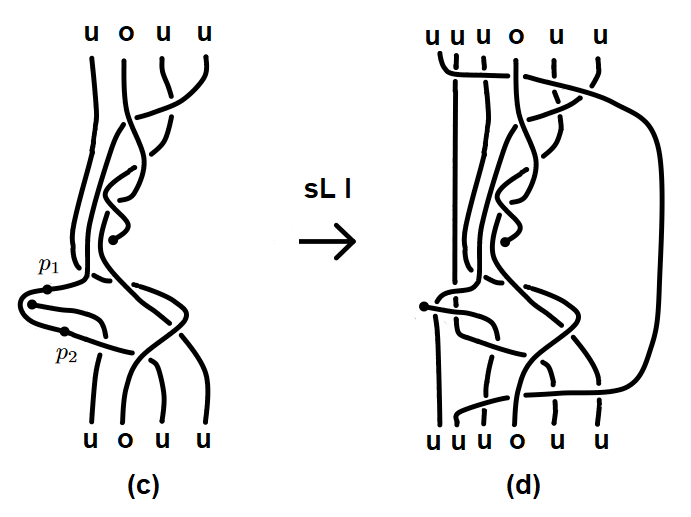}
        \caption{Example of the sL I-move.}
    \label{fig:SL1}
\end{figure}

 Fig.~\ref{fig:ex1} illustrates another example of an sL II-move. Note that the closure of the diagram in Fig.~\ref{fig:ex1}a is the knotoid in Fig.~\ref{fig:knotoid}b. In $S^2$ this knotoid is trivial while in $\mathbb{R}^2$ it is not. From (a) to (b) there is an  sL II-move performed. The closure of the diagram in Fig.~\ref{fig:ex1}b or Fig.~\ref{fig:ex1}c is the trivial knotoid. This means that the diagrams in Figs.~\ref{fig:ex1}a and~\ref{fig:ex1}c are not $L$-equivalent. 

\begin{figure}[htp]
    \centering
    \includegraphics[width=9cm]{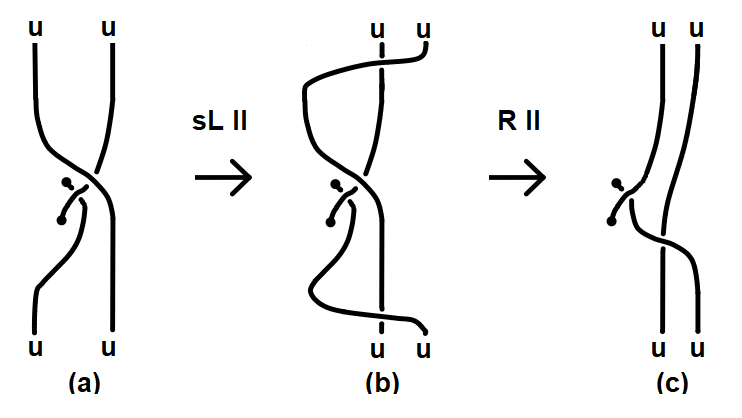}
        \caption{Example of the sL II-move.}
    \label{fig:ex1}
\end{figure}

\subsection{Braidoid equivalence for spherical knotoids}

\begin{definition}
    Two labelled braidoid diagrams $b_1,b_2$, not necessarily with the same number of strands, are said to be \textit{sL-equivalent} if they are related to each other by a finite number of $L$-equivalence moves and the sL I and sL II-moves.
\end{definition}

\noindent We shall now state a key lemma on which our main result is based.
\begin{lemma}
\label{spherical}
    Any labeled braidoid diagram $b$ is $sL$-equivalent to a diagram $b^*$ that is in canonical spherical state.  
\end{lemma}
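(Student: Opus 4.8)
The plan is to show that an arbitrary labeled braidoid diagram $b$ can be brought into canonical spherical state by a finite sequence of $sL$-moves and $L$-equivalence moves, by tracking the position of the leg $l$ and reducing to the three cases already isolated in Definitions~\ref{slI}, \ref{slII} and Remark~\ref{rem:examplenonspher}. First I would put $b$ in general position so that the leg is not vertically aligned with any braidoid end, endpoint or crossing; then exactly one of the following holds: (i) $l$ already lies in the left unbounded region of $\mathbb{R}\times[0,1]\setminus b_0$ and outside every vertical strip of consecutive corresponding ends, in which case $b$ is already in canonical spherical state and there is nothing to prove; (ii) $l$ lies in a bounded region of the complement; or (iii) $l$ lies in an unbounded region (left or right) but inside a vertical strip determined by two consecutive pairs of corresponding ends.

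The core of the argument is a descent on how many strands separate the leg from the left unbounded region. In case (ii), the leg is bounded, so I would invoke the sL I-move of Definition~\ref{slI}: choosing the strand $A$ closest to $l$ on its left and the points $p_1,p_2$ as prescribed, a single sL I-move deletes the separating sub-arc of $A$ and re-routes it around the right (or left, according to the sign of $x(h)-x(l)$) of the diagram, thereby strictly decreasing the number of strands lying between $l$ and the left unbounded region. In case (iii), where the leg is already in an unbounded region but trapped in a vertical strip, I would apply the sL II-move of Definition~\ref{slII} to shift the offending end position $i$ to the far right (position $n$) without moving the diagram, which frees $l$ from the strip; if the leg then sits in the right unbounded region it is transferred to the left by the sequence of fake swing and fake forbidden moves described in the second case of Remark~\ref{rem:examplenonspher}, all of which are $L$-equivalence moves. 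Since each sL I-move reduces a nonnegative integer (the number of strands between $l$ and the left region) and each sL II-move reduces the number of strips containing the leg, iterating terminates after finitely many steps in a diagram $b^*$ for which $l$ is in the left unbounded region and outside every vertical strip, i.e. $b^*$ is in canonical spherical state.

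The main obstacle I anticipate is the bookkeeping ensuring that the chosen strand $A$ and cut points $p_1,p_2$ in the sL I-move can \emph{always} be selected to satisfy $y(p_2)<y(l)<y(p_1)$ and $x(l)<x(p_1)<x(p_2)$, and that after the re-routing the monotone (downward-oriented) structure of all strands is restored without introducing new local maxima or violating the forbidden moves. A subtle point is that after an sL I-move the re-routed arc passes over/under the rest of the diagram respecting labels, so I must check that the closure remains a valid labeled braidoid and that no new strand is created \emph{between} $l$ and the left region, guaranteeing genuine progress rather than a stalemate. I would handle this by appealing to the general-position argument already used to place the leg, together with the small planar isotopies invoked in Definition~\ref{slI}, and by noting that the first case of Remark~\ref{rem:examplenonspher} disposes of the borderline situation in which the leg's $y$-coordinate dominates (or is dominated by) the arc crossing its strip, which is absorbed into the $L$-equivalence via fake swing and fake forbidden moves. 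Once termination and validity are established, the resulting $b^*$ is $sL$-equivalent to $b$ by construction, completing the proof.
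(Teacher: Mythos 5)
Your proposal is correct and follows essentially the same strategy as the paper's proof: use sL II-moves to carry the end positions obstructing the leg over to the right, and sL I-moves to eliminate, one at a time, the strands meeting the horizontal ray $(-\infty,l]$, with termination guaranteed by the strictly decreasing count of obstructions (the paper likewise notes that each sL I-move drops the number of intersection points from $k$ to $k-1$). The only cosmetic difference is that you interleave the two kinds of moves in a descent loop, whereas the paper performs all the sL II-moves first and then inducts on the remaining intersection points.
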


\begin{proof}
    Suppose that $b$ has $n$ ends and $l$ lies between $i-1$ and $i$, and that $x(l)<x(h)$. Then all closing arcs on the vertical lines corresponding to ends $i-1,i-2, \ldots, 1$ will obstruct $l$ from moving to the left unbounded region of $\mathbb{R} \times [0,1] \setminus b_0$. By performing consecutive sL II-moves, starting from position $i-1$, we can successively carry all end positions $i-1, \ldots, 1$ to the right side of the diagram (recall Fig.~\ref{fig:sphericalbraidoidmove2}), thus obtaining a new labeled braidoid diagram $b'$, $sL$-equivalent to $b$, in which $l$ lies to the left of the vertical line passing through the upper and lower $1$-ends. If the horizontal line $(-\infty,l]$ meets no strands of $b'$ then $b'$ is in canonical spherical state. If not, then let $a_1, \ldots, a_k$ be the intersection points of $b'$ with $(-\infty,l]$, with $x(a_k)<\ldots<x(a_1)$. We can assume that $a_1,\ldots, a_k$ are not crossing points and let $A_1,\ldots ,A_k$ be a choice of arcs containing $a_1,\ldots ,a_k$ respectively, for performing sL I-moves. Performing the move to $A_1$ we obtain a labeled braidoid diagram $b_1'$ with two new ends, whose closure is equivalent to the closure of $b$ in $S^2$ and the line $(-\infty,l]$ meets the rest of $b_1'$ in $k-1$ points. Proceeding inductively, we obtain a labeled braidoid diagram $b_k'$ where $(-\infty,l]$ has no intersection points with $b_k'$. This means that, by Definition~\ref{def:sphericalstate},  $b_k'$ is in canonical spherical state.
\end{proof}

\begin{theorem}
\label{Kokki}
    The closures of two labeled braidoid diagrams are equivalent multi-knotoids in $S^2$ if and only if the labeled braidoid diagrams are $sL$-equivalent.
\end{theorem}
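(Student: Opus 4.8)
The plan is to prove Theorem~\ref{Kokki} by reducing the spherical statement to the planar $L$-equivalence Theorem~\ref{Nesli}, using the correspondence between spherical multi-knotoids and planar multi-knotoids in spherical state (Theorem~\ref{thm:sphericalstate}) together with the normalization Lemma~\ref{spherical}. The two directions of the biconditional require separate arguments.

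\medskip

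\noindent\textbf{The ``if'' direction (sL-equivalence $\Rightarrow$ equivalent closures in $S^2$).} First I would verify that each of the generating moves preserves the spherical-equivalence class of the closure. For the $L$-equivalence moves this is immediate from Theorem~\ref{Nesli}: two $L$-equivalent braidoid diagrams have planar-equivalent closures, and planar equivalence descends to spherical equivalence via the map $s^{-1}$ of Observation 5 (every planar Reidemeister/isotopy move is in particular a spherical one). It then remains to check that an sL\,I-move and an sL\,II-move each induce a spherical move (or a sequence of spherical, Reidemeister and isotopy moves) on the closure. This is essentially a diagram-chase: I would close up both the pre-move and post-move braidoid diagrams and exhibit the arc of the closure that is carried ``around the back of the sphere,'' i.e.\ across the point at infinity, matching the definition of the spherical move. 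The figures Fig.~\ref{fig:close sl1} and Fig.~\ref{fig:sphericalbraidoidmove2} already encode the local picture; the content is to confirm that the relabelling of ends and the three-sub-arc connecting path in Definition~\ref{slI} reconstruct precisely an arc transferred through the unbounded region, so that the closures differ by a spherical move and hence $K_1 \simeq_{S^2} K_2$.

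\medskip

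\noindent\textbf{The ``only if'' direction (equivalent closures $\Rightarrow$ sL-equivalent).} This is the substantive direction. Given two labeled braidoid diagrams $b_1, b_2$ whose closures $K_1, K_2$ are spherical-equivalent, I would first apply Lemma~\ref{spherical} to replace each $b_i$ by an $sL$-equivalent diagram $b_i^{*}$ in canonical spherical state; it thus suffices to treat diagrams already in canonical spherical state. For such diagrams the closures are planar multi-knotoids in spherical state (Definition~\ref{def:sphericalstate}), which by Theorem~\ref{thm:sphericalstate} correspond bijectively to the spherical knotoids. The key reduction is then: since $K_1 \simeq_{S^2} K_2$, by Proposition~\ref{normal correspondence} any normal representatives are normal-equivalent, and the closures of $b_1^{*}, b_2^{*}$ are exactly such normal (spherical-state) representatives; passing through the stereographic projection $s$, their images are related by planar Reidemeister moves and planar isotopy \emph{away from the endpoints} (Observation 3 and Observation 5), i.e.\ they are planar-equivalent in $\mathbb{R}^2$. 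Now Theorem~\ref{Nesli} applies directly to these planar-equivalent closures and yields that $b_1^{*}$ and $b_2^{*}$ are $L$-equivalent, hence in particular $sL$-equivalent. Chaining the three $sL$-equivalences $b_1 \sim b_1^{*} \sim b_2^{*} \sim b_2$ completes the argument.

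\medskip

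\noindent\textbf{Main obstacle.} I expect the crux to lie in justifying the middle step of the ``only if'' direction: namely that a spherical equivalence between the closures $K_1, K_2$ can be organized so as to descend to a \emph{planar} equivalence between their spherical-state representatives without ever having to move the leg back into a bounded region. A general spherical equivalence is free to push arcs across infinity repeatedly, and I must argue that, after normalization via Lemma~\ref{spherical}, every such spherical move on the closure is either (i) already realized by a planar move on the spherical-state diagram, or (ii) realized by an $sL$-move on the braidoid. The careful point is that the braidoiding algorithm of Theorem~\ref{thm:normalbraidoiding} keeps the leg in the unbounded region, so spherical moves on the closure that do not disturb the leg's region are captured by ordinary $L$-equivalence, while those that do correspond exactly to the sL\,I and sL\,II moves; verifying that these two families of moves exhaust the ways a spherical move can interact with a canonical-spherical-state closure—and that no additional forbidden-move subtleties arise—is the delicate part, and it is where the explicit constructions in Definitions~\ref{slI} and~\ref{slII} must be shown to be not merely sufficient but also complete.
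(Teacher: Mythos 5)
Your proposal is correct and follows essentially the same route as the paper: verify that $L$-moves and the two $sL$-moves preserve the spherical class of the closure for the ``if'' direction, and for the ``only if'' direction normalize both diagrams to canonical spherical state via Lemma~\ref{spherical}, invoke Proposition~\ref{normal correspondence} to convert the spherical equivalence of the (normal) closures into a planar equivalence, and then apply Theorem~\ref{Nesli} to obtain $L$-equivalence of the normalized braidoids. The ``main obstacle'' you flag is in fact dispatched exactly by the Turaev correspondence of Proposition~\ref{normal correspondence}, just as you suspected, so no further argument about organizing the spherical moves is needed.
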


\begin{proof}
    For the `if' part, note first that $L$-moves give equivalent multi-knotoids in $\mathbb{R}^2$ and by extension in $S^2$. Further, one can easily see that the closures of two labeled braidoid diagrams that differ by an sL I-move or an sl II-move give rise to equivalent multi-knotoids in $S^2$. Indeed, for the sL I-move see Fig. \ref{fig:close sl1} and observe how the diagrams $(b)$ and $(c)$ differ by a spherical move in $S^2$.
    
\begin{figure}[htp]
    \centering
    \includegraphics[width=13cm]{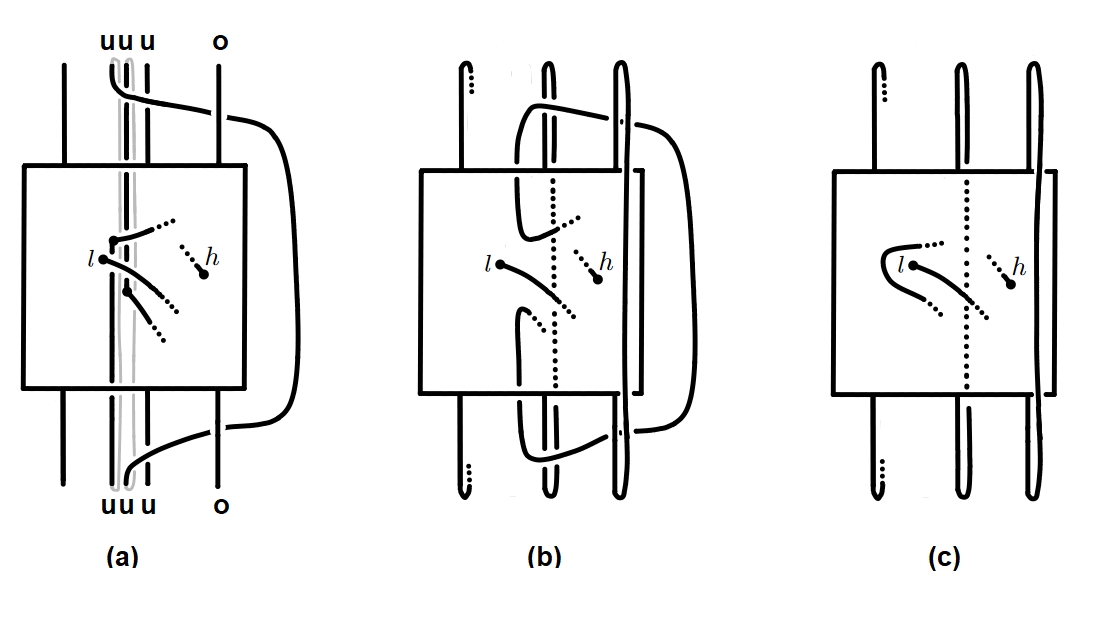}
        \caption{The closure after an sL I-move is performed.}
    \label{fig:close sl1}
\end{figure}

For the sl II-move see Fig.~\ref{fig:close sl2} and observe how one can move the closure arc in the right diagram by a sequence of moves consisting of a spherical move, Reidemeister II and III moves and two opposite Reidemeister I moves and bring it to its original position on the left diagram).

\begin{figure}[htp]
    \centering
    \includegraphics[width=10cm]{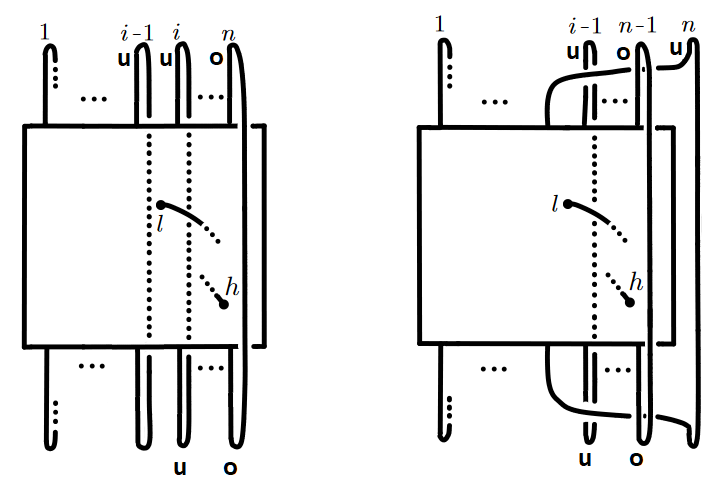}
        \caption{The closure after an sL II-move is performed.}
    \label{fig:close sl2}
\end{figure}

    Note that the reason why spherical moves can be performed in the braidoid closures after the $sL$-moves is because the $sL$-moves respect the labeling of the braidoid diagram. Should any of the arcs of the $sL$-moves that pass through the upper/lower strands fail to respect the labeling (i.e. passing over an $o$-labeled strand or under a $u$-labeled strand, then upon closure, these arcs will become trapped by the closure arcs and fail to perform the necessary spherical moves. 

    For the `only if' part, let $b_1, b_2$ be labeled braidoid diagrams (not necessarily with the same number of ends) that have equivalent closures in $S^2$. By Lemma~\ref{spherical} there exist labeled braidoids $b_1^{*}$, $b_2^{*}$ in canonical spherical state, so that $b_1 \sim_{sL} b_1^{*}, \ b_2 \sim _{sL} b_2^{*}$ and from the above:
    $$\hat{b_1^{*}} \simeq _{S^2} \hat{b_1} \simeq _{S^2} \hat{b_2} \simeq _{S^2} \hat{b_2^{*}}$$ 
    Now $\hat{b_1^{*}}$ and $\hat{b_1^{*}}$ are normal multi-knotoid diagrams equivalent in $S^2$. By Turaev \cite{turaev2012vladimir} (recall Proposition~\ref{normal correspondence}) such multi-knotoid diagrams are equivalent in $\mathbb{R}^2$ and therefore by \cite{gugumcu2021braidoids} we have that $$b_1^{*} \sim_L b_2^{*}$$ 
    Hence $$b_1 \sim _{sL} b_1^{*} \sim _{L} b_2^{*} \sim _{sL} b_2$$ This means that $b_1 \sim_{sL} b_2$ and the proof of the Theorem is complete.    
\end{proof}

\begin{remark}
    The spherical move in Fig.~\ref{fig:sphericalmove} can be described on a braidoid level in the following way: Suppose that we have a knotoid diagram in $S^2$ and we want to perform a spherical move on an arc of the diagram that can be seen as outmost. Up to knotoid diagram equivalence, the arc in question can be an up-arc or a down-arc. In the case where the arc is an up-arc, we can perform the braidoiding algorithm on the rest of the diagram leaving our arc for last. This means that the braoidoid diagram whose closure represents the original knotoid diagram has the up-arc, where the spherical move takes place, as a closure arc for the last (or first) braidoid end. Then, the spherical move can be interpreted as an sL II-move on the closure arc of the last braidoid end (see Fig.~\ref{fig:spher_rem_1}).\\
    \indent Closing the end braidoid diagram we recover the original knotoid diagram with a spherical move performed on the outmost up-arc. For the other case, where the spherical move is performed on a down-arc, by a planar isotopy, we can bring the arc in a position so that the up-arc parts of the arc are oriented downwards and notice that, when the braidoiding algorithm is performed, this particular arc will be ignored since it is a down-arc. So we can present the knotoid diagram as the closure of a braidoid diagram that has the down-arc as part of some braidoid strand. Then the spherical move can be presented as the result of an sL I-move on the points $p_1$ and $p_2$, as can be seen in Fig.~\ref{fig:spher_rem_2}.

\begin{figure}[htp]
    \centering
    \includegraphics[width=11cm]{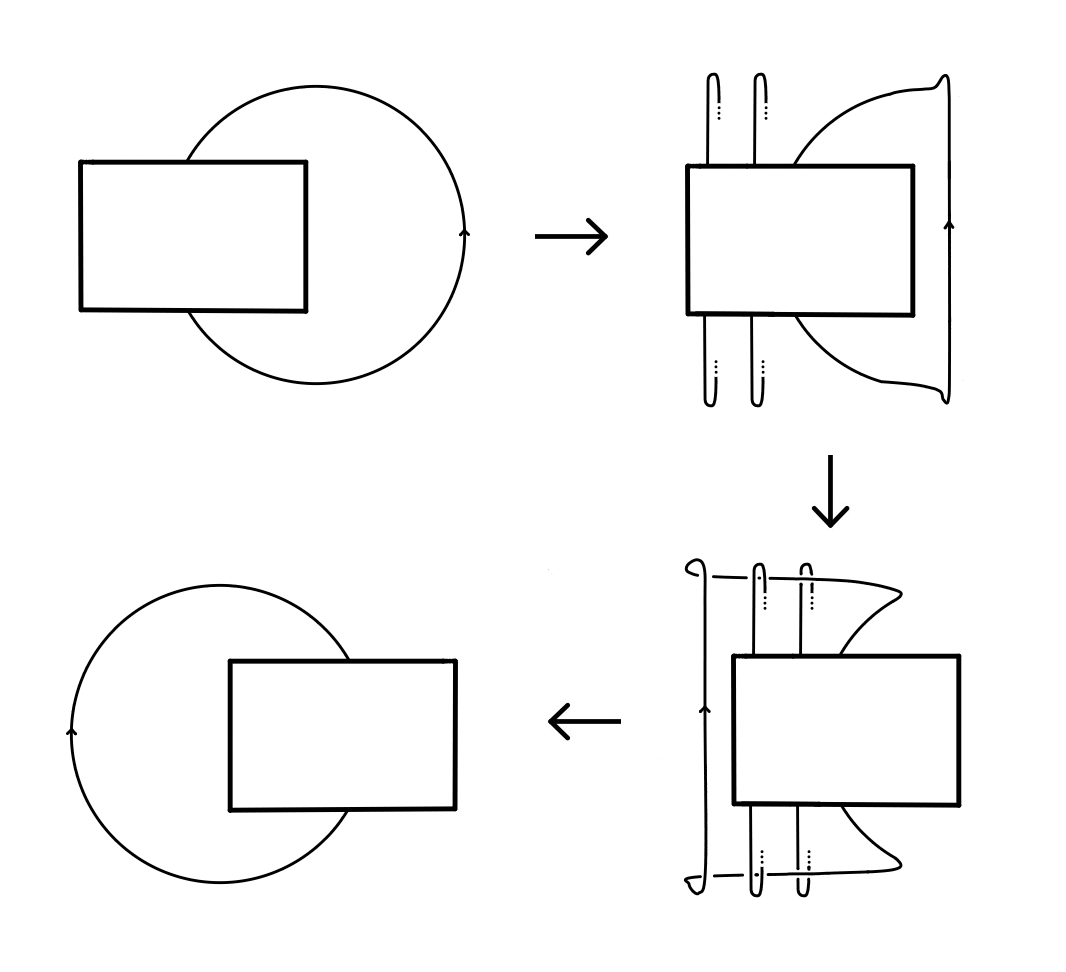}
        \caption{The spherical move on an up-arc through an sL II-move.}
    \label{fig:spher_rem_1}
\end{figure}

\begin{figure}[htp]
    \centering
    \includegraphics[width=16cm]{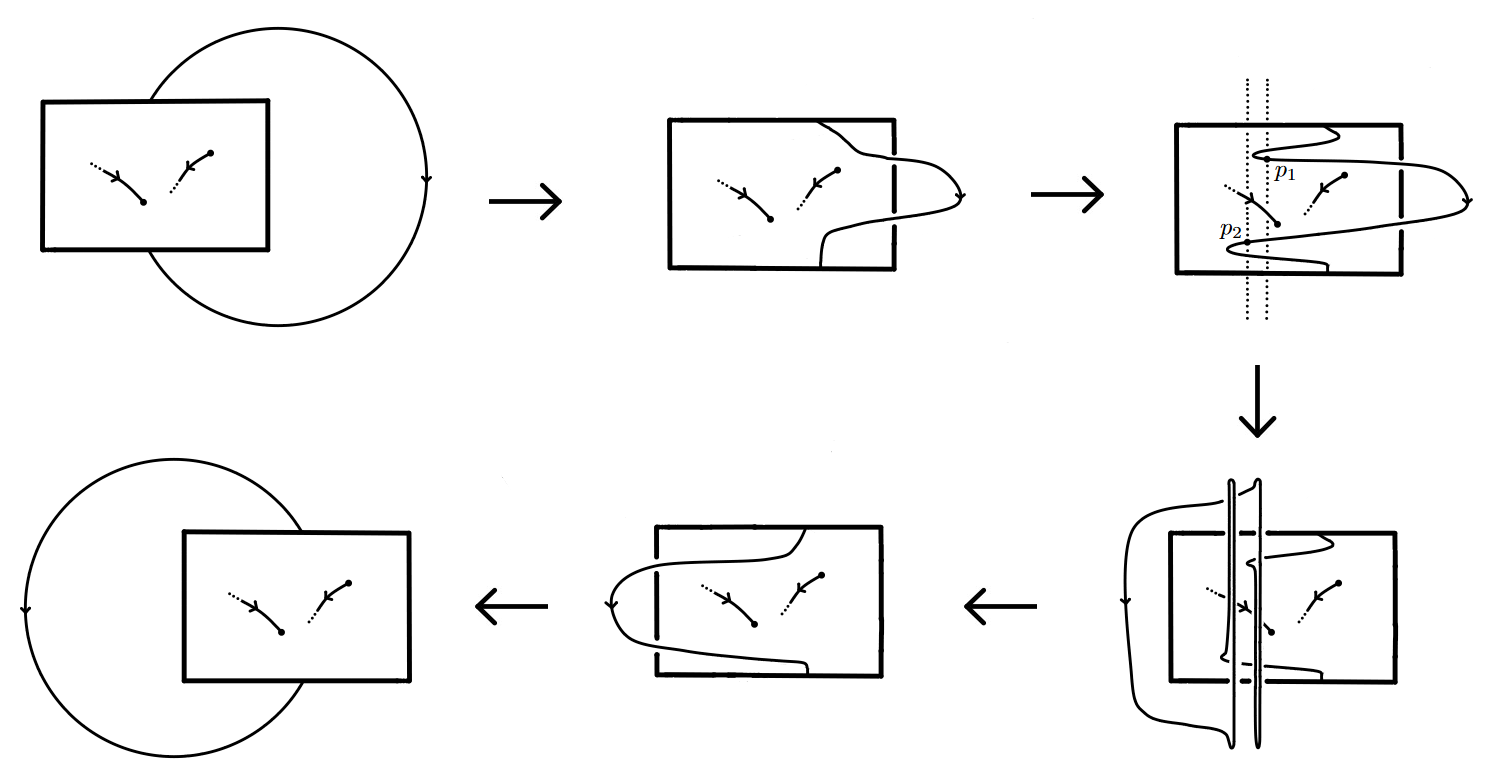}
        \caption{The spherical move on an down-arc through an sL I-move.}
    \label{fig:spher_rem_2}
\end{figure}

\end{remark}

\noindent Using Theorems \ref{Tur1} and \ref{Kokki} we also have the following:

\begin{corollary}
    There is a bijection between the set of $sL$-equivalence classes of labeled braidoid diagrams and label preserving ambient isotopy classes of simple multi-theta curves.
\end{corollary}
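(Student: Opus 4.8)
The plan is to realize the desired correspondence as a composition of two bijections, one supplied by Theorem~\ref{Kokki} and one by Theorem~\ref{Tur1}. First I would establish that the closure operation induces a bijection
$$\Phi\colon \bigl\{\,sL\text{-equivalence classes of labeled braidoid diagrams}\,\bigr\} \;\longrightarrow\; \mathcal{K}(S^2).$$
That $\Phi$ is well-defined and injective is precisely the content of Theorem~\ref{Kokki}: two labeled braidoid diagrams are $sL$-equivalent if and only if their closures are equivalent multi-knotoids in $S^2$, so the $sL$-class of a diagram $b$ both determines and is determined by the spherical equivalence class of its closure $\hat b$.

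For the surjectivity of $\Phi$ I would start from an arbitrary spherical multi-knotoid $k \in \mathcal{K}(S^2)$ and produce a labeled braidoid diagram closing to it. Choose a normal representative $N$ of $k$, which exists by Proposition~\ref{normal correspondence}; its stereographic image $s(N)$ is a planar multi-knotoid diagram in spherical state (Theorem~\ref{thm:sphericalstate}); and applying the braidoiding algorithm as in Theorem~\ref{thm:normalbraidoiding} yields a labeled braidoid diagram $b$ in canonical spherical state whose closure $\hat b$ represents $k$. Hence every element of $\mathcal{K}(S^2)$ lies in the image of $\Phi$, and $\Phi$ is a bijection.

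Second, I would invoke Turaev's correspondence in its multi-component form to obtain a bijection
$$\Psi\colon \mathcal{K}(S^2) \;\longrightarrow\; \bigl\{\,\text{label-preserving ambient isotopy classes of simple multi-theta curves}\,\bigr\}.$$
The single-component case is exactly Theorem~\ref{Tur1}. What remains is to check that the construction extends componentwise: a spherical multi-knotoid with $n$ components consists of one open segment together with $n-1$ closed components, and under Turaev's lift the open segment produces the theta-graph factor $\Theta$ — simple precisely because its middle arc comes from a genuine knotoid, by the same argument as in the single-component case — while the $n-1$ closed components produce the link factor $(S^1)^{n-1}$, matching the definition of a simple multi-theta curve recalled above. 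Granting this, $\Psi$ is a label-preserving bijection.

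Finally, the composition $\Psi \circ \Phi$ is the asserted bijection, and the corollary follows. The main obstacle I anticipate is the second step: Theorem~\ref{Tur1} is stated only for single-component spherical knotoids, so the substantive work is to verify that Turaev's geometric lift is compatible with the additional link components and still descends to label-preserving ambient isotopy classes in the multi-component setting. Once this multi-version of Turaev's theorem is secured, the remainder of the argument is a purely formal composition of the two bijections $\Phi$ and $\Psi$.
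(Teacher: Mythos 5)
Your proposal is correct and follows essentially the same route as the paper, which offers no argument beyond the phrase ``Using Theorems~\ref{Tur1} and~\ref{Kokki}'': you compose the closure bijection onto $\mathcal{K}(S^2)$ (well-definedness and injectivity from Theorem~\ref{Kokki}, surjectivity from Proposition~\ref{normal correspondence}, Theorem~\ref{thm:sphericalstate} and Theorem~\ref{thm:normalbraidoiding}) with Turaev's correspondence. Your explicit flagging of the need to extend Theorem~\ref{Tur1} from theta-curves to simple multi-theta curves is a legitimate point the paper glosses over, but the componentwise extension you sketch is exactly what is intended.
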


\section{Discussion}

The theory of braidoids is a new diagrammatic setting for knotoids extending at the same time the theory of braids. The Markov theorem provides an algebraic viewpoint for links in $S^3$, giving rise to a multitude of link and 3-manifold invariants constructed by way of braid representations. Braidoids do not possess an obvious algebraic structure as of yet, however work has been done toward this goal. Nevertheless, the  $L$-moves provide the framework for formulating (planar and spherical) braidoid equivalences.   Our hope is that the present paper provides a useful tool in understanding and expanding this already rich and active area of mathematical research.

\nocite{*}
\bibliographystyle{plain}
\bibliography{refs}

\end{document}